\theoremstyle{plain}
\newtheorem{theorem}{Theorem}[section]
\newtheorem{definition}[theorem]{Definition}
\newtheorem{lemma}[theorem]{Lemma}
\newtheorem{proposition}[theorem]{Proposition}
\newtheorem{remark}[theorem]{Remark}
\numberwithin{equation}{section}
\newcommand{\tr}{\operatorname{tr}}
\newcommand{\rank}{\operatorname{rank}}
\newcommand{\diag}{\operatorname{diag}}
\newcommand{\R}{\mathbb{R}}
\newcommand{\ex}{\mathbb{E}}
\def\og{\leavevmode\raise.3ex\hbox{$\scriptscriptstyle\langle\!\langle$~}}
\def\fg{\leavevmode\raise.3ex\hbox{~$\!\scriptscriptstyle\,\rangle\!\rangle$}}
\title{A Characterization of Wishart Processes and Wishart Distributions}
\author{Piotr Graczyk \thanks{LAREMA, Universit\'e d'Angers, France.piotr, graczyk@univ-angers.fr}, Jacek Ma\l{}ecki \thanks{Faculty of Pure and Applied Mathematics, Wroc{\l}aw University of Science and Technology, Poland, jacek.malecki@pwr.edu.pl. Supported by the National Science Centre Poland (2013/11/D/ST1/02622).}, and Eberhard Mayerhofer\thanks{University of Limerick, Castletroy, County Limerick, Ireland, eberhard.mayerhofer@ul.ie. Supported by ERC (278295) and SFI (08/SRC/FMC1389).}
}
\begin{document}
\maketitle
\begin{abstract}
A characterization of the existence of non-central Wishart distributions (with shape and non-centrality parameter) as well as the existence of solutions to Wishart stochastic differential equations (with initial data and drift parameter) in terms of their exact parameter domains is given. These two families are the natural extensions of the non-central chi-square distributions and the squared Bessel processes to the positive semidefinite matrices.
\end{abstract}
\thispagestyle{empty}

\newpage

\section{Introduction and Preliminaries}
The aim of this paper is to characterize the parameter domain of non-central Wishart distributions (with shape, scale and non-centrality parameters) and that of Wishart processes, a class of positive semi-definite diffusion processes (with drift parameter).

Denote by $\mathcal{S}_p$ the space of symmetric $p\times p$ matrices and let $\mathcal{S}_p^+$ be the open cone of positive definite matrices, with topological
closure $\bar {\mathcal S}^+_p$, the positive semi-definite matrices. The classical Gindikin\footnote{The name of this set originates from
Gindikin's \cite{bib:Gin} work in a general multivariate setting.} set $W_{0}$ is defined as the set of admissible $\beta\in\R$ such that there exists a random matrix $X$ with values in $\bar {\mathcal S}^+_p$ (equivalently a measure with support in $\bar {\mathcal S}^+_p$) such that its Laplace transform is of the form
$$
\ex e^{-\tr(uX)}=(\det(I+\Sigma u))^{-\beta},\ \ u\in \bar {\mathcal S}^+_p,
$$
where $\Sigma\in {\mathcal S}_p^+$. It is well-known (cf.~\cite{bib:farautKOR}, pp. 137, 349) that
$$
W_0=\frac12 B \cup \left[\frac{p-1}2,\infty\right) \/,
$$
where $B=\{0,1,\cdots,p-2\}$. 

A more intricate question concerns the existence of non-central Wishart distributions, which in addition involves a parameter of non-centrality:

\begin{definition}\label{def wish}
The general non-central Wishart distribution $\Gamma_p(\beta,\omega;\Sigma)$ on $\bar{\mathcal{S}}_p^+$ is
defined (whenever it exists) by its Laplace transform
\begin{equation}\label{FLT Mayerhofer Wishart}
\mathcal L (\Gamma_p(\beta,\omega;\Sigma))(u)= \left(\det(I+\Sigma u)\right)^{-\beta}e^{-\tr(u(I+\Sigma
u)^{-1}\omega)},\quad u\in \bar{\mathcal S}_p^+
\end{equation}
where $\beta>0$ denotes its shape parameter, $\Sigma\in \mathcal S_p^{+}$ is the
scale parameter and the parameter of non-centrality equals
$\omega\in \bar{\mathcal{S}}_p^+$. 
\end{definition}

Random matrices $X$ verifying \eqref{FLT Mayerhofer Wishart} arise in statistics as estimators of the covariance matrix parameter $\Sigma$ of a normal population. In fact, 
for the random matrix
$$
X = \xi_1\xi_1^T+\ldots+\xi_n\xi_n^T=:q(\xi_1,\dots,\xi_n)\/,
$$
where for $i=1,\dots,n$, 	$\xi_{i}\sim \mathcal N_p(m_i,\Sigma/2)$ are independent, normally distributed column vectors in $\R^p$, the Laplace transform of $X$ is given by
the right-hand side of \eqref{FLT Mayerhofer Wishart} with $\beta =n/2$ and $\omega=q(m_1,\ldots,m_n)$, {see Johnson and Kotz \cite[Chap.38 (47), p.175]{john-kotz}.}

Accordingly, the pair $(\omega,\beta)$ is said to belong to the non-central Gindikin set $W$
if there exists a random matrix $X$ with values in $\bar {\mathcal S}_p^+$ having the Laplace transform \eqref{FLT Mayerhofer Wishart} for a matrix $\Sigma\in {\mathcal S}_p^+$ \footnote{If $\Sigma$ is of maximal rank, this definition is indeed independent of $\Sigma$, see Lemma \ref{mayx}.}.

\newpage
Note the following:
\begin{itemize}
\item If $(\omega,\beta)\in W$ then $\beta\geq 0$, otherwise $\mathbb E[e^{-\tr(uX)} ]$ would be unbounded; and clearly,
$(0,\beta)\in W$ if and only if $\beta \in W_0$. 
\item In the case, where $\rank(\omega)=1$ and $\beta\not=0$, the characterization of the non-central Gindikin set $W$ is given in \cite{bib:PR}: then $(\omega,\beta)\in W$ if and only if
$\beta\in W_0$.
\item For $\beta>\frac{p-1}{2}$, Bru \cite{bib:b91} shows that Wishart  processes have Laplace transform given by \eqref{FLT  Mayerhofer Wishart}.
\end{itemize}

The general problem of existence and non-existence of non-central Wishart distributions is studied by Letac and Massam \cite{bib:LetMassFalse}\footnote{However, the statement and proof in \cite{bib:LetMassFalse} are incomplete, as pointed out by \cite{bib:mayer} and \cite{bib:mayerJMA}.}.
 In a more recent work Mayerhofer \cite{bib:mayerJMA} reveals that there is an interplay between the rank of the non-centrality parameter $\omega$ and the magnitude of $\beta$ in the discrete part of the classical Gindikin ensemble: if $(\omega,\beta)\in W$ and $2\beta\in B$, then $\rank(\omega)\le 2\beta +1$.

{The Laplace transform formulas in Johnson and Kotz \cite{john-kotz} and Bru \cite{bib:b91} and }the results in \cite{bib:mayerJMA} allow to conjecture\footnote{In \cite{bib:LetMass} and a previous version of this paper, the name {\it Mayerhofer Conjecture} is used. 
The conjecture was first presented at the CIMPA Workshop in Hammamet in 2011.} the following:
\medskip

{\bf NCGS Conjecture}. {\it The non-central Gindikin set is characterized by }
$$
(\omega,\beta )\in W \ \ \Leftrightarrow \ \ (2\beta\in [p-1,\infty), \;\omega\in \bar {\mathcal S}_p^+) \ {\it or}\ (2\beta\in B, \rank(\omega)\le 2\beta).
$$
 
 A proof of the NCGS Conjecture has been put forward by the preprint \cite{bib:LetMass}. The proof of \cite{bib:LetMass} is technical\footnote{It requires a detailed analysis of the singular and continuous part of certain non-central distributions. Besides, the present version of \cite{bib:LetMass} does
not prove that $(w, p) \in W$ implies $(0, p) \in W$.} and does not
provide an intuitive explanation for the particular parametric restrictions of shape and non-centrality parameter.

The present paper gives a first complete proof of the NCGS conjecture, which reveals and builds on the intimate connection between non-central Wishart distributions and Wishart processes (\cite{bib:b91}, see also \cite[Theorem 1.1]{donati2004some}). The latter constitute positive semi-definite solutions $(X_t)_{t\geq0}$ of stochastic differential equations of the form

\begin{eqnarray}\label{eq:Wishart:SDePLUS}
dX_t = \sqrt{X_t}dW_t+dW^T_t\sqrt{X_t}+\alpha Idt\/,\quad X_t\in \bar {\mathcal S}_p^+\/,\quad X_0=x_0\in \bar {\mathcal S}_p^+,
\end{eqnarray}
where $\sqrt{X_t}$ is the unique positive square root of $X_t$, $W$ is a $p\times p$ matrix of standard Brownian motions, and $\alpha\geq0$ is the drift parameter. 

Wishart processes are natural generalizations of squared Bessel Processes \cite{YorEnc}. It is demonstrated in the present paper that the existence of Wishart processes depends crucially on the drift parameter.

The paper proves a necessary and sufficient condition for the existence of Wishart processes, and how this existence issue is related to the one of Wishart distributions. Already Bru \cite{bib:b91}, who introduces Wishart processes for the first time, realizes the explicit formula for the Laplace transform of $X_t$:
\begin{proposition}Bru(\cite[Theorem 3] {bib:b91})\label{stochWallach}
If the stochastic differential equation (\ref{eq:Wishart:SDePLUS}) with $x_0\in \bar {\mathcal S}_p^+$ has a global weak solution in $\bar {\mathcal S}_p^+$, then 
$X_t$ is Wishart distributed for each $t\geq 0$. In particular, 
\begin{equation}
\label{Lap_Wish} 
\ex^{x_0}[\exp(- \tr(uX_t)]=(\det(I+2tu))^{-\alpha/2} \exp[- \tr(x_0(I+2tu)^{-1}u))],\quad u\in \bar{\mathcal S}^+_p\/.
\end{equation}
\end{proposition}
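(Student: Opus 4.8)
The plan is to exhibit, for a fixed horizon $T>0$ and $u\in\mathcal S_p^+$, an exponential-affine process $M_t=\exp(-\tr(\phi(t)X_t)-\psi(t))$ that is a martingale on $[0,T]$ and whose terminal value is $\exp(-\tr(uX_T))$; evaluating $\ex^{x_0}[M_T]=M_0$ then produces the Laplace transform directly. Since we are given a global weak solution, $X_t$ is a continuous $\bar{\mathcal S}_p^+$-valued semimartingale, so Itô's formula is available. The first step is to read off the covariation structure of the SDE~\eqref{eq:Wishart:SDePLUS}: a direct computation using $d\langle W_{ab},W_{cd}\rangle=\delta_{ac}\delta_{bd}\,dt$ and the symmetry of $\sqrt{X_t}$ gives
$$
d\langle X_{ij},X_{kl}\rangle = \left((X_t)_{ik}\delta_{jl}+(X_t)_{il}\delta_{jk}+(X_t)_{jk}\delta_{il}+(X_t)_{jl}\delta_{ik}\right)dt,
$$
so that for a deterministic symmetric $C^1$ matrix function $\phi$ one obtains $d\langle \tr(\phi X)\rangle_t=4\tr(X_t\phi(t)^2)\,dt$.

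Next I would apply Itô to $M_t$ with $\phi$ symmetric-matrix-valued and $\psi$ scalar, both deterministic. Collecting the finite-variation part of $dM_t/M_t$ produces the drift
$$
\left[-\tr(\phi'(t)X_t)+2\tr(\phi(t)^2 X_t)-\alpha\tr(\phi(t))-\psi'(t)\right]dt.
$$
Requiring this to vanish for every $X_t\in\bar{\mathcal S}_p^+$ forces the matrix Riccati system $\phi'=2\phi^2$ together with $\psi'=-\alpha\tr(\phi)$; imposing the terminal data $\phi(T)=u$, $\psi(T)=0$ then guarantees $M_T=\exp(-\tr(uX_T))$. These equations integrate explicitly: linearising the Riccati equation through $\eta:=\phi^{-1}$ turns it into $\eta'=-2I$, whence $\phi(t)=(u^{-1}+2(T-t)I)^{-1}$ and, at $t=0$, $\phi(0)=(I+2Tu)^{-1}u$. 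Integrating $\psi'$ and using $\frac{d}{ds}\log\det(u^{-1}+2sI)=2\tr((u^{-1}+2sI)^{-1})$ gives $\psi(0)=\tfrac{\alpha}{2}\log\det(I+2Tu)$, i.e. $e^{-\psi(0)}=\det(I+2Tu)^{-\alpha/2}$, reproducing~\eqref{Lap_Wish}. The case of general $u\in\bar{\mathcal S}_p^+$ then follows by continuity, since both sides of~\eqref{Lap_Wish} are continuous in $u$ and $I+2Tu\succeq I$ is always invertible.

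The main obstacle is to upgrade the local martingale $M$ to a genuine martingale, which is precisely what is needed for $\ex^{x_0}[M_T]=M_0$. Here the positivity built into the problem is decisive: for $u\succ0$ the matrices $u^{-1}+2(T-t)I$ are positive definite for all $t\in[0,T]$, so $\phi(t)\succeq0$, and hence $\tr(\phi(t)X_t)\ge0$ because $X_t\in\bar{\mathcal S}_p^+$. Since $\psi$ is continuous, hence bounded, on $[0,T]$, the process $M_t=\exp(-\tr(\phi(t)X_t)-\psi(t))$ is uniformly bounded on $[0,T]$, and a bounded local martingale is a true martingale, which closes the argument. I expect the only delicate points to be the verification that $\phi(t)\succeq0$ throughout $[0,T]$ and the passage to the boundary $u\in\partial\bar{\mathcal S}_p^+$; both are handled by the explicit solution above.
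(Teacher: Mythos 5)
Your proof is correct: the covariation computation, the Riccati system $\phi'=2\phi^2$, $\psi'=-\alpha\tr(\phi)$ with terminal data, the explicit solution $\phi(t)=(u^{-1}+2(T-t)I)^{-1}$, and the boundedness argument (via $\phi(t)\succeq 0$ and $X_t\in\bar{\mathcal S}_p^+$) that upgrades the local martingale to a true martingale are all sound, as is the limiting argument for $u\in\partial\bar{\mathcal S}_p^+$. Note that the paper itself gives no proof of this proposition --- it quotes it as Theorem 3 of Bru \cite{bib:b91} --- and your exponential-affine martingale argument is essentially the classical one used there and in the affine-processes literature, so there is nothing to reconcile; the only cosmetic remark is that you need the Riccati equations only as a \emph{sufficient} condition for the drift of $M$ to vanish along the path, so ``forces'' can be weakened to ``is guaranteed by.''
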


In the present paper, it is also shown how to construct full-fledged Wishart processes from individual Wishart distributions. The main result is thus a three-fold characterization:
\begin{theorem}\label{th super}
Let $x_0\in\bar{\mathcal S}_p^+$ and $\alpha\geq 0$. The following
are equivalent:
\begin{enumerate}
\item \label{super a} The SDE \eqref{eq:Wishart:SDePLUS} has a global weak solution with $X_0=x_0$.
\item \label{super b} Either $\alpha\geq p-1$, or $\alpha\in B$ and $\rank(x_0)\leq \alpha$.
\item \label{super c} $(x_0,\alpha/2)\in W$.
\end{enumerate}

\end{theorem}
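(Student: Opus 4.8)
The plan is to prove Theorem~\ref{th super} as a cycle of implications, \eqref{super b}$\,\Rightarrow\,$\eqref{super a}$\,\Rightarrow\,$\eqref{super c}$\,\Rightarrow\,$\eqref{super b}, carrying the analytically delicate necessity through the associated Wishart process rather than through a direct study of the non-central densities. Throughout I would reduce to $\Sigma=I$ using the $\Sigma$-independence of membership in $W$ (Lemma~\ref{mayx}) together with the congruence $X\mapsto\Sigma^{-1/2}X\Sigma^{-1/2}$.

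The implication \eqref{super a}$\Rightarrow$\eqref{super c} is immediate from Proposition~\ref{stochWallach}. If \eqref{eq:Wishart:SDePLUS} admits a global weak solution, then for fixed $t>0$ the law of $X_t$ has the Laplace transform \eqref{Lap_Wish}; since $u$ commutes with $(I+2tu)^{-1}$, the exponent $\tr(x_0(I+2tu)^{-1}u)$ equals $\tr(u(I+2tu)^{-1}x_0)$, so \eqref{Lap_Wish} is exactly \eqref{FLT Mayerhofer Wishart} with $\beta=\alpha/2$, $\omega=x_0$ and $\Sigma=2tI\in\mathcal S_p^+$. Hence $X_t\sim\Gamma_p(\alpha/2,x_0;2tI)$ and $(x_0,\alpha/2)\in W$.

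For \eqref{super b}$\Rightarrow$\eqref{super a} I would treat the two regimes separately. For $\alpha\ge p-1$ existence is essentially Bru's: build the solution from the eigenvalue system of \cite{bib:b91} and control attainment of the boundary $\partial\bar{\mathcal S}_p^+$ by comparing the smallest eigenvalue with a squared Bessel process whose dimension is $\alpha-(p-1)\ge0$, which never leaves $[0,\infty)$ (cf.~\cite{YorEnc}). For $\alpha=n\in B$ with $\rank(x_0)\le n$ I would give the explicit low-rank construction: factor $x_0=m_0m_0^{T}$ with $m_0\in\R^{p\times n}$, let $M_t$ solve $dM_t=dB_t$ for a $p\times n$ matrix Brownian motion $B$ with $M_0=m_0$, and set $X_t:=M_tM_t^{T}$. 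It\^o's formula gives $dX_t=dB_t\,M_t^{T}+M_t\,dB_t^{T}+nI\,dt$, and the martingale part has the same matrix bracket as $\sqrt{X_t}\,dW_t+dW_t^{T}\sqrt{X_t}$; a matrix martingale-representation argument then yields the driving Brownian motion $W$, exhibiting $X$ as a weak solution with drift $n$ that automatically stays in $\{\rank\le n\}$.

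The crux is \eqref{super c}$\Rightarrow$\eqref{super b}, the necessity half of the NCGS conjecture, which I expect to be the main obstacle. Assuming $(x_0,\alpha/2)\in W$, so that $\Gamma_p(\alpha/2,x_0;I)$ exists, I would first promote this single distribution to a Markov process: the Laplace transforms \eqref{FLT Mayerhofer Wishart} satisfy a convolution identity in $(\beta,\omega)$ and a time-consistency in $\Sigma=2tI$ that yield Chapman--Kolmogorov, so Kolmogorov extension produces a process with the prescribed marginals that solves \eqref{eq:Wishart:SDePLUS} weakly. The delicate point here is verifying that the intermediate transition laws exist, i.e.\ that all reachable states remain admissible; this is handled by the support analysis of the non-central Wishart, which confines the process to $\{\rank\le 2\beta\}$ in the discrete regime. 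With the process in hand, I would then read off the constraints from Bru's eigenvalue SDEs: the radial behaviour of the smallest positive eigenvalue is that of a squared Bessel process of dimension $\alpha-(r-1)$ on a rank-$r$ stratum, so admissibility of the cone forces $\alpha\ge p-1$ whenever the process can reach full rank, and otherwise pins the process to a stratum with $r\le\alpha$ and $\alpha$ integral. This boundary/stratum analysis is exactly what sharpens the rank bound $\rank(\omega)\le 2\beta+1$ of \cite{bib:mayerJMA} to the required $\rank(x_0)\le\alpha$ and rules out the non-integer values in $(0,p-1)$; together with the rank-one case of \cite{bib:PR} it completes \eqref{super b}.
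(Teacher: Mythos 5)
Your cycle \ref{super b}$\Rightarrow$\ref{super a}$\Rightarrow$\ref{super c}$\Rightarrow$\ref{super b} is a legitimate architecture, and its first two legs are essentially the paper's: \ref{super a}$\Rightarrow$\ref{super c} is Proposition \ref{stochWallach}, and your low-rank construction $X_t=M_tM_t^\top$ plus a martingale-representation step is exactly the proof of Theorem \ref{char sdes} combined with Proposition \ref{prop wish semi}. The problems are concentrated in \ref{super c}$\Rightarrow$\ref{super b}, which is the real content of the theorem. The first gap: ``Kolmogorov extension produces a process with the prescribed marginals that solves \eqref{eq:Wishart:SDePLUS} weakly'' is false as stated. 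Kolmogorov's theorem yields only finite-dimensional distributions; it gives no path regularity, no semimartingale property, and no driving Brownian motion. Passing from the transition family $\Gamma_p(\beta,x;2tI)$ to a weak solution of the SDE is a substantial step, which the paper carries out in Appendix \ref{appendix a}: one needs the Feller property, a c\`adl\`ag version, identification of the semimartingale characteristics and vanishing of the jump measure via \cite{Cpaths}, and then martingale representation \cite[Theorem V.20.1]{rogerswilliams2} (Propositions \ref{prop a22} and \ref{prop wish semi}). Moreover, the ``delicate point'' you do flag (admissibility of reachable states) is discharged by appealing to ``the support analysis of the non-central Wishart'' as if it were known; that support statement is Proposition \ref{prop may}, one of the paper's new results, whose proof requires Lemmas \ref{extra supp} and \ref{lem 1} and the drift condition of \cite{bib:CFMT}, so it cannot be invoked as an external tool.

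The second, more serious gap is your endgame: deriving the parameter constraints from Bru's eigenvalue SDEs on a rank-$r$ stratum would fail. The eigenvalue equations \eqref{eq:eigenvalues} contain the singular terms $(\lambda_i-\lambda_j)^{-1}$ and are valid only while the eigenvalues are distinct; on $D_p(r)$ with $r\leq p-2$ the zero eigenvalue has multiplicity at least two, so the eigenvalues are permanently collided precisely on the set where your ``squared Bessel of dimension $\alpha-(r-1)$'' comparison is supposed to operate, and no such comparison can be extracted there. Avoiding exactly this singularity is the point of the paper's Section \ref{sec2}: the elementary symmetric polynomials $e_n$ are smooth in the matrix entries, their SDEs (Propositions \ref{Matrix_to_Polynomial} and \ref{prop:Poly}) have continuous coefficients even at collisions, and then $\ex e_n(t)$ is an explicit polynomial in $t$ whose leading coefficient is negative unless \ref{super b} holds (proof of Theorem \ref{char sdes}); equivalently, at the semigroup level, $\mathcal A e_n=(p-n+1)(\alpha-n+1)e_{n-1}$ (Proposition \ref{prop GM1}) together with the positive maximum principle produces the contradiction in Section \ref{proof NCGS}. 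Your eigenvalue heuristic also offers no mechanism for excluding non-integer $\alpha\in(0,p-1)$, which the paper obtains either from the symmetric-polynomial expectations or from the reduction to the central case via Lemma \ref{mayx} \ref{x3} and the classical Gindikin set $W_0$. In short, your route (distribution $\rightarrow$ semigroup $\rightarrow$ SDE solution $\rightarrow$ constraints) coincides with the paper's closing Remark of Section \ref{proof NCGS}, but both pillars of that route --- the passage to an honest SDE solution, and the mechanism extracting the constraints --- are respectively missing and resting on a tool that breaks down where it is needed.
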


Our proof of the NCGS Conjecture (that is, Theorem \ref{th super} \ref{super b} $\Leftrightarrow$ \ref{super c}) is based on an analysis of 
affine Wishart semigroups. As a new tool, the action of a class of polynomials on Wishart processes is used, which arise as coefficients of the characteristic polynomial of a symmetric matrix. A full characterization of Wishart processes is provided by (Theorem \ref{th super} \ref{super a} $\Leftrightarrow$ \ref{super b}). 

For convenience of the reader, but at the expense of proving an additional implication, Theorem \ref{th super} is split into
two independent theorems in the following two chapters. They require different mathematical tools and therefore
can be read independently. Chapter \ref{sec2} is concerned with the existence of solutions to Wishart stochastic differential equations
using elementary stochastic analysis with symmetric polynomials (Theorem \ref{char sdes} comprises the equivalence \ref{super a} $\Leftrightarrow$ \ref{super b} of Theorem \ref{th super}). Chapter \ref{sec em} concerns the existence of Wishart distributions (the NCGS conjecture, which comprises \ref{super b} $\Leftrightarrow$ \ref{super c} of Theorem \ref{th super} ). Here the Markovian viewpoint is used, in particular the fact that Wishart semigroups are affine Feller semigroups. Finally, in Section \ref{char low rank} a conjecture by Damir Filipovi\'c \cite{bib:filcon} on the existence of such semigroups on the cones of lower rank matrices is proved.
\section{Gindikin sets for Wishart Processes}\label{sec2}
This section studies the question of solutions in $\bar{\mathcal{S}}_p^{+}$ of the Wishart SDE \eqref{eq:Wishart:SDePLUS}, using the dynamics of some polynomial functionals of these solutions.

For a symmetric $p\times p$ matrix $X$, define  the  elementary symmetric polynomials 
 \begin{equation}\label{bijection}
   e_n(X) = \sum_{i_1<\ldots<i_n}\lambda_{i_1}(X)\lambda_{i_2}(X)\ldots \lambda_{i_n}(X)\/,\ \ \ \quad n=1,\ldots,p,
 \end{equation}
in the eigenvalues $\lambda_1(X) \le  \ldots\le\lambda_p(X)$ of $X$. Moreover, the convention $e_0(X)\equiv 1$ is used. Up to the sign change, the polynomials $e_n$ are the coefficients of the characteristic polynomial of $X$, i.e.
$$
\det(X-uI)=(-1)^p  u^p + (-1)^{p-1} e_1(X)u^{p-1}+\ldots  -e_{p-1}(X)u+e_p(X) 
$$
and are polynomial functions of the entries of the matrix $X$. In particular, $e_p(X)=\det X$. 

In \cite{bib:gm2},  symmetric polynomials related to general class of non-colliding particle systems were studied in details. Here  similar results are presented,
adapted to the matrix SDE
\begin{equation}\label{MatrixSDE}
  dX_t = g(X_t)dW_th(X_t)+h(X_t)dW_t^Tg(X_t)+b(X_t)dt\/,
\end{equation}
where the continuous functions $g,h,b$ act spectrally\footnote{
Recall that if $g:\R\mapsto\R$ then $g(X)$ is defined spectrally, i.e. $g(U \diag(\lambda_i) U^T)=U \diag(g(\lambda_i)) U^T$, where $U\in SO(p)$.} on $\mathcal S_p$ and $W_t$ is a Brownian $p\times p$ matrix. Henceforth, abbreviate $\sigma=2gh$ and $G(x,y) = g^2(x)h^2(y)+g^2(y)h^2(x)$.
Furthermore, the natural bijection \eqref{bijection} between the   eigenvalues  $\Lambda=(\lambda_1\ldots  \lambda_p)$ 
and the polynomials $e=(e_1,\ldots,e_p)$ is used,  extended  to the closed Weyl chamber $\bar C_+= \{(x_1,\ldots,x_p)\in\R^p: x_1\le x_2<\ldots\le x_p\}$, see  \cite[p.6]{bib:gm2}.
Furthermore, write $\Lambda=\Lambda(e)$ for the inverse bijection on the set $\overline{e(C_+)}$.
The notation $e_{n}^{\overline i}$ for the incomplete polynomial of order $n$,  not containing the variable $\lambda_i(e)$, is used; the notation $e_{n}^{\overline i,\overline j}$ is analogous. {Moreover, set $e_{0}^{\overline i}\equiv 1$ and $e_{-1}^{\overline i,\overline j}\equiv 0$.}
\medskip 
\begin{proposition}\label{Matrix_to_Polynomial} 
Let $X=(X_t)_{t\geq 0}$ be a weak solution of (\ref{MatrixSDE}) (with possible finite time blow-up). Then the symmetric polynomials 
$e_n=e_n(X)$, $n=1,\ldots, p$, are continuous semimartingales described   
by the system of SDEs ($n=1,\ldots,p$)
\begin{eqnarray} \label{eq:en:SDE1}
  de_n = \left(\sum_{i=1}^p\sigma^2(\lambda_i(e))(e_{n-1}^{\overline i})^2\right)^{\frac{1}{2}}dV_n 
+\left(\sum_{i=1}^pb(\lambda_i(e))e_{n-1}^{\overline i}-\sum_{i<j}G(\lambda_i(e),\lambda_j(e))e_{n-2}^{{\overline i},{\overline j}}\right)dt\/,
 \end{eqnarray}
 where $V_n$ are Brownian motions on $\R$ such that 
$
   d\left<e_n,e_m\right> = \sum_{i=1}^p \sigma^2(\lambda_i(e) )e_{n-1}^{\overline i}e_{m-1}^{\overline i}dt\/.
$
\end{proposition}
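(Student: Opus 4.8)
The plan is to obtain the scalar system \eqref{eq:en:SDE1} from the matrix equation \eqref{MatrixSDE} in two stages: first pass from $X_t$ to its eigenvalue process, then apply It\^o's formula to the multilinear maps $e_n$ regarded as functions of the eigenvalues. Throughout I write $X_t=U\diag(\lambda_1,\dots,\lambda_p)U^T$ with $\lambda_i=\lambda_i(X_t)$ and orthonormal eigenvectors $u_i$ (the columns of $U$), and I use the two elementary facts about elementary symmetric polynomials that $\partial e_n/\partial\lambda_i=e_{n-1}^{\overline i}$ and that $e_n$ is affine in each $\lambda_i$, so that $\partial^2 e_n/\partial\lambda_i^2=0$.

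First I would record the local (away from eigenvalue collisions) dynamics of the $\lambda_i$. Since $X\mapsto\lambda_i(X)$ is a spectral function with gradient $u_iu_i^T$ and Hessian given by the standard divided-difference perturbation formula, It\^o's formula applied to \eqref{MatrixSDE} yields
\begin{equation*}
d\lambda_i=\sigma(\lambda_i)\,dB_i+\Big(b(\lambda_i)+\sum_{k\neq i}\frac{G(\lambda_i,\lambda_k)}{\lambda_i-\lambda_k}\Big)\,dt,
\end{equation*}
where the $B_i$ are standard Brownian motions. The coefficients here come from computing the quadratic covariations of the entries of $U^TX_tU$: using that $U^T\,dW_t\,U$ again has independent standard entries, one finds that the diagonal entries have covariation $\sigma^2(\lambda_i)\delta_{ij}\,dt$ (hence the $B_i$ are independent and $\lambda_i$ has diffusion coefficient $\sigma(\lambda_i)$), while the off-diagonal entry $(i,k)$ has variation $G(\lambda_i,\lambda_k)\,dt$, which feeds through the Hessian into the repulsion drift.

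I would then apply It\^o's formula to $e_n=e_n(\lambda_1,\dots,\lambda_p)$. The martingale part is $\sum_i e_{n-1}^{\overline i}\sigma(\lambda_i)\,dB_i$; since the $B_i$ are independent this is a single continuous local martingale of quadratic variation $\sum_i\sigma^2(\lambda_i)(e_{n-1}^{\overline i})^2\,dt$, and writing it as $(\sum_i\sigma^2(\lambda_i)(e_{n-1}^{\overline i})^2)^{1/2}\,dV_n$ defines the driving Brownian motion $V_n$; the stated cross-variation $d\langle e_n,e_m\rangle=\sum_i\sigma^2(\lambda_i)e_{n-1}^{\overline i}e_{m-1}^{\overline i}\,dt$ is immediate from the same independence. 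The second-order It\^o term vanishes identically, because $\partial^2 e_n/\partial\lambda_i^2=0$ and the cross-covariations $d\langle\lambda_i,\lambda_k\rangle$ are zero for $i\neq k$. It remains to simplify the drift $\sum_i e_{n-1}^{\overline i}b(\lambda_i)+\sum_i e_{n-1}^{\overline i}\sum_{k\neq i}G(\lambda_i,\lambda_k)/(\lambda_i-\lambda_k)$. The first sum already matches \eqref{eq:en:SDE1}; for the second I would symmetrize over the unordered pair $\{i,k\}$, using the symmetry of $G$, to reduce each pair to $G(\lambda_i,\lambda_k)(e_{n-1}^{\overline i}-e_{n-1}^{\overline k})/(\lambda_i-\lambda_k)$, and then invoke the polynomial identity $e_{n-1}^{\overline i}-e_{n-1}^{\overline k}=(\lambda_k-\lambda_i)e_{n-2}^{\overline i,\overline k}$ (which follows by splitting each incomplete polynomial according to whether the remaining index is included). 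This turns the apparently singular sum into $-\sum_{i<k}G(\lambda_i,\lambda_k)e_{n-2}^{\overline i,\overline k}$, exactly the drift in \eqref{eq:en:SDE1}.

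The main obstacle I expect is not the algebra above but the \emph{collision set}, i.e.\ the times at which two eigenvalues coincide: there the eigenvalue SDE is singular and the spectral decomposition fails to be smooth, so the derivation just sketched is valid only on the open random set where the $\lambda_i$ are distinct. The resolution is precisely the point of working with the $e_n$ rather than the $\lambda_i$: the right-hand side of \eqref{eq:en:SDE1} is a genuine polynomial in the entries of $X_t$ (the divided differences have been absorbed into the regular coefficients $e_{n-2}^{\overline i,\overline k}$), and $e_n$ is itself a polynomial, hence a $C^\infty$ function of the entries, so $e_n(X_t)$ is an honest semimartingale with no singularities. I would therefore either argue that the collision times carry zero $dt$-measure and extend the identity by continuity, or --- more cleanly --- bypass the eigenvalues entirely by applying It\^o's formula to $e_n$ directly as a polynomial in the matrix entries and then re-expressing the globally smooth gradient and Hessian contractions in spectral coordinates, where the same identity $e_{n-1}^{\overline i}-e_{n-1}^{\overline k}=(\lambda_k-\lambda_i)e_{n-2}^{\overline i,\overline k}$ shows that the divided-difference part of the Hessian of the spectral function $e_n$ is in fact regular across collisions.
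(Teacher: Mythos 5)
Your proposal is correct and, in its essentials, is the paper's own proof: the paper likewise (i) invokes the eigenvalue SDEs \eqref{eq:eigenvalues} (citing Bru and Theorem 3 of \cite{bib:gm11}) on the set where the spectrum is simple, (ii) performs exactly your drift simplification via the identity $e_{n-1}^{\overline i}-e_{n-1}^{\overline k}=(\lambda_k-\lambda_i)e_{n-2}^{\overline i,\overline k}$ (outsourced to Propositions 3.1--3.2 of \cite{bib:gm2}), and (iii) handles collisions precisely by your second, ``cleaner'' alternative: It\^o's formula applied to the globally smooth polynomial map $F:X\mapsto(e_1(X),\dots,e_p(X))$ shows the $e_n$ are continuous semimartingales for \emph{every} solution, and the coefficient functions It\^o produces (derivatives of $F$ contracted with the coefficients of \eqref{MatrixSDE}) are continuous on all of $\mathcal S_p$ and agree with the continuous right-hand sides of \eqref{eq:en:SDE1} on the dense open set $U$ of matrices with simple spectrum, hence agree on $\bar U=\mathcal S_p$. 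One caveat: you should commit to that second resolution, because your first one --- arguing that collision times carry zero $dt$-measure --- genuinely fails in the setting of this paper. Solutions of \eqref{MatrixSDE} can have permanently coincident eigenvalues: $X\equiv 0$ solves the Wishart SDE with $\alpha=0$, $x_0=0$, and more generally every low-rank Wishart process constructed in Theorem \ref{char sdes} (the central objects here) has the eigenvalue $0$ with multiplicity at least $p-\alpha\geq 2$ at every time, so the collision set is all of $[0,\infty)$. The continuity-of-coefficients argument requires no statement whatsoever about the time spent at collisions, which is exactly why the paper phrases it that way.
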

\begin{proof}
Note that here the equation is considered on $\mathcal{S}_p$ which does not require solutions to live in $\bar{\mathcal{S}}_p^+$ (as is required in reference to Wishart processes). Since the coefficients of the equation (\ref{MatrixSDE}) are continuous, a local weak solution exists.  This solution, before its possible blow-up, is considered.

The symmetric polynomials $(e_1,\ldots, e_p)$ are given by  an analytic function 
\[
F: \mathcal{S}_p\rightarrow \mathbb R^p,\quad  X\to (e_1(X),\ldots,e_p(X)),
\]
since each elementary symmetric polynomial is given in terms of the coefficients of the matrix $X$. {Thus It\^o's formula implies that $(e_1,\ldots, e_n)$ are continuous semimartingales (for every starting point $x_0$ and even when the eigenvalues collide).}

The SDEs describing $(e_1,\ldots, {e_p})$ can be determined similarly as in Propositions 3.1 and 3.2 in \cite{bib:gm2}, 
which generalize the proof of (4.1) in \cite{bib:b91}.
One uses the SDEs
 for the eigenvalues
\begin{eqnarray}
\label{eq:eigenvalues}
 d\lambda_i = 2g(\lambda_i)h(\lambda_i)dB_i+\left({b}(\lambda_i)+\sum_{j\neq i}\frac{G(\lambda_i,\lambda_i)}{\lambda_i-\lambda_i}\right)dt\/,\quad i=1,\ldots,p\/,
\end{eqnarray}
which are available, according to Theorem 3 from \cite{bib:gm11}, when  eigenvalues $\lambda_i(0)$ of $x_0$ are all distinct and before their eventual collision. {However, the It\^o formula states that the martingale part and the bounded variation part of $(e_1,\ldots, e_p)$ are given in terms of derivatives of the smooth function $F$ and those derivatives have
just be determined on the open set $U=\{X\in \mathcal{S}_p: \lambda_i(X)\neq \lambda_j(X) \text{ for all } i\neq j, \text{ with } 1\leq i,j\leq p\/\}$. Since the derivatives of $F$ are continuous on $\mathcal{S}_p$ as well as the coefficients in \eqref{eq:en:SDE1} (the singular expressions $(\lambda_i-\lambda_j)^{-1}$ appearing in (\ref{eq:eigenvalues}) are no longer present in \eqref{eq:en:SDE1}), one can conclude, by continuity, that the equalities hold on $\bar{U}=\mathcal{S}_p$, i.e. one can drop  the conditions that eigenvalues of the initial point are all different and that they are non-colliding for $t>0$.}

\end{proof}

Using Proposition \ref{Matrix_to_Polynomial} the following characterization of the symmetric polynomials related to Wishart processes is obtained:
\medskip
\begin{proposition}
\label{prop:Poly}
Let $X_t$ be a Wishart process, i.e. a solution of the matrix SDE \eqref{eq:Wishart:SDePLUS}. Then the
 symmetric polynomials $e_n=e_n(X)$, $n=1,\ldots, p$ are semimartingales satisfying the following system of SDEs
\begin{eqnarray}
d e_1&=&{2 \sqrt{e_1}dV_1+ p\alpha dt,}\label{eq:e1:SDEs}\\
   de_n &=& M_n(e_1,\ldots,e_p)dV_n +(p-n+1)(\alpha-n+1)e_{n-1}dt\/,\quad n=2,\ldots,p-1\/, \label{eq:polynom_first:SDEs} \\
   de_p &=& 2\sqrt{e_{p-1}e_p}dV_p +(\alpha-p+1)e_{p-1}dt,  \label{eq:polynom_last:SDEs}  
	\end{eqnarray}
where  $V_n$, $n=1,\ldots, p$  are  one-dimensional Brownian motions and the functions $M_n$ are continuous on $\R^p$.
Furthermore, for  $n=1,\ldots, p$, the processes ${\mathcal M}_n(t):=\int_0^t M_ndV_n$ are martingales satisfying
\begin{equation}\label{finite qvar}
\mathbb E[\int_0^t\langle {\mathcal M}_n, {\mathcal M}_n \rangle_s ds]<\infty, \quad \text{for each} \  t>0\ 
{ {\it and}
\ n=1,\dots, p.}
\end{equation}
	\end{proposition}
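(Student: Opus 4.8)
The plan is to obtain the system \eqref{eq:e1:SDEs}--\eqref{eq:polynom_last:SDEs} by specializing the general identity \eqref{eq:en:SDE1} of Proposition \ref{Matrix_to_Polynomial} to the Wishart coefficients, and then to establish the integrability \eqref{finite qvar} as a separate step. First I would match \eqref{eq:Wishart:SDePLUS} against the template \eqref{MatrixSDE}: writing $\sqrt{X_t}\,dW_t = g(X_t)\,dW_t\,h(X_t)$ forces the spectral functions $g(x)=\sqrt{x}$, $h(x)\equiv 1$, $b(x)\equiv\alpha$. Consequently $\sigma^2(x)=4x$ and $G(x,y)=g^2(x)h^2(y)+g^2(y)h^2(x)=x+y$. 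Substituting these into \eqref{eq:en:SDE1}, the martingale coefficient becomes $M_n=\bigl(\sum_{i=1}^p 4\lambda_i(e_{n-1}^{\overline i})^2\bigr)^{1/2}$ and the drift becomes $\alpha\sum_{i}e_{n-1}^{\overline i}-\sum_{i<j}(\lambda_i+\lambda_j)e_{n-2}^{\overline i,\overline j}$.

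The core of the computation is a pair of elementary identities for the incomplete symmetric polynomials. Counting, for each monomial of $e_{n-1}$, the indices $i$ not occurring in it gives $\sum_i e_{n-1}^{\overline i}=(p-n+1)e_{n-1}$. For the second sum I would first symmetrize, $\sum_{i<j}(\lambda_i+\lambda_j)e_{n-2}^{\overline i,\overline j}=\sum_{i}\lambda_i\sum_{j\neq i}e_{n-2}^{\overline i,\overline j}$, then apply the same counting in the $p-1$ remaining variables to get $\sum_{j\neq i}e_{n-2}^{\overline i,\overline j}=(p-n+1)e_{n-2}^{\overline i}$, and finally use $\sum_i\lambda_i e_{n-2}^{\overline i}=(n-1)e_{n-1}$ (each monomial of $e_{n-1}$ is produced once for each of its $n-1$ factors). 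Combining, the drift collapses to $(p-n+1)(\alpha-n+1)e_{n-1}$, i.e.\ \eqref{eq:polynom_first:SDEs}. The boundary cases reduce cleanly: for $n=1$ the conventions $e_0^{\overline i}\equiv1$, $e_{-1}^{\overline i,\overline j}\equiv0$ give $M_1^2=\sum_i 4\lambda_i=4e_1$ and drift $p\alpha$, yielding \eqref{eq:e1:SDEs}; for $n=p$ one uses $e_{p-1}^{\overline i}=\prod_{j\neq i}\lambda_j$, so $\lambda_i(e_{p-1}^{\overline i})^2=e_p\,e_{p-1}^{\overline i}$ and $M_p^2=4e_p\sum_i e_{p-1}^{\overline i}=4e_{p-1}e_p$, giving \eqref{eq:polynom_last:SDEs}. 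To see that each $M_n$ is a continuous function of $e=(e_1,\dots,e_p)$ I would note that $\sum_i 4\lambda_i(e_{n-1}^{\overline i})^2$ is symmetric in $\lambda_1,\dots,\lambda_p$, hence a polynomial in $(e_1,\dots,e_p)$ by the fundamental theorem of symmetric functions; its square root is continuous wherever the radicand is nonnegative, which holds on the state space $\bar{\mathcal S}_p^+$ where all $\lambda_i\geq0$, and it can be extended continuously to all of $\R^p$ by taking the positive part under the root.

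Finally I would prove \eqref{finite qvar}. Since $\langle\mathcal M_n,\mathcal M_n\rangle_s=\int_0^s M_n^2\,dr$, Tonelli reduces the claim to $\int_0^t \ex[M_n^2(X_r)]\,dr<\infty$. Using $0\le\lambda_i\le\tr(X)=e_1$ on $\bar{\mathcal S}_p^+$ together with $e_{n-1}^{\overline i}\le\binom{p-1}{n-1}e_1^{\,n-1}$, the coefficient is dominated by a homogeneous bound $M_n^2\le C_n\,e_1^{\,2n-1}$, so it suffices to control the moments of the trace. By Proposition \ref{stochWallach} a global Wishart solution is Wishart distributed with Laplace transform \eqref{Lap_Wish}; since the right-hand side extends analytically in $u$ to a neighbourhood of the origin, $X_r$ has finite moments of all orders and these depend continuously on $r$, hence are bounded on $[0,t]$. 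This yields $\sup_{r\le t}\ex[\tr(X_r)^{2n-1}]<\infty$ and therefore the required finiteness, which upgrades each continuous local martingale $\mathcal M_n$ to a genuine $L^2$-martingale.

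I expect the main obstacle to be the integrability step rather than the algebra. A general solution of \eqref{MatrixSDE} may blow up in finite time, so the true martingale property is not automatic and genuinely relies on the a priori knowledge (Proposition \ref{stochWallach}) that a global Wishart solution has the explicit, finite-moment Laplace transform \eqref{Lap_Wish}. The symmetric-function identities, though they require careful bookkeeping with the conventions $e_0^{\overline i}\equiv1$ and $e_{-1}^{\overline i,\overline j}\equiv0$, are routine once the counting arguments are set up.
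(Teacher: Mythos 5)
Your proposal is correct and follows essentially the same route as the paper: specialize Proposition \ref{Matrix_to_Polynomial} to $g(x)=\sqrt{x}$, $h\equiv 1$, $b\equiv\alpha$, verify the combinatorial drift identity $\alpha\sum_i e_{n-1}^{\overline i}-\sum_{i<j}(\lambda_i+\lambda_j)e_{n-2}^{\overline i,\overline j}=(p-n+1)(\alpha-n+1)e_{n-1}$ (which the paper states without the counting details you supply), and prove \eqref{finite qvar} by dominating $\langle\mathcal M_n,\mathcal M_n\rangle$ by a power of $e_1$ and invoking finiteness of all moments of the trace. The only (cosmetic) difference is in sourcing those moments: you use Proposition \ref{stochWallach} and analyticity of the matrix Laplace transform \eqref{Lap_Wish} near $u=0$, whereas the paper notes via \eqref{eq:e1:SDEs} that $e_1$ is a squared Bessel process with non-central chi-squared marginals --- the same distributional fact.
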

	
	\medskip
	\begin{remark}
 Note that by Proposition \ref{Matrix_to_Polynomial}, the explicit forms of the martingale parts  $ {d{\mathcal M}_n}=M_n(e_1,\ldots,e_p)dV_n$  as well as their brackets 
$d\left<e_n,e_{m}\right>$  are known   for every $n,m=1,\ldots,p$.
\\
 {Equation \eqref{eq:e1:SDEs} is given by Bru 
 	\cite{bib:b91} and is used in the proof of \eqref{finite qvar}}.
 Equation \eqref{eq:polynom_last:SDEs} is just kept for informative reasons. They are both covered by \eqref{eq:polynom_first:SDEs}, by setting $n=1$ and $n=p$.

\end{remark}
		\begin{proof} Applying Proposition \ref{Matrix_to_Polynomial} to the SDE \eqref{eq:Wishart:SDePLUS},  one finds that
\begin{equation}\label{eq: Mn}
	M_n=2\left(\sum_{i=1}^p\lambda_i(e_{n-1}^{\overline i})^2\right)^{1/2}.
\end{equation}
Moreover, the drift coefficients of $de_n$ satisfy
	$$ 
	\sum_{i=1}^p \alpha e_{n-1}^{\overline i}-\sum_{i<j}(\lambda_i+\lambda_j)e_{n-2}^{\overline{i},\overline j}=(p-n+1)(\alpha-n+1)e_{n-1}.
	$$

It remains to show \eqref{finite qvar}, for each $n=1,\dots,p$. For $n=1$, {by \eqref{eq:e1:SDEs}},
 $e_1(t)$ is a squared Bessel process. Furthermore, since $e_1(t)$ is non-centrally chi-squared distributed, for each $t>0$, and for each $m\geq 1$ 
\begin{equation}\label{eq M1}
\int_0^t \mathbb E[\vert e_1(s)\vert^m ds]<\infty,
\end{equation}
hence by Fubini
\begin{equation*}
 \mathbb E[\int_0^t\vert e_1(s)\vert^m ds]<\infty.
\end{equation*}
For $m=1$, this estimate implies
\begin{equation}
\mathbb E[\int_0^t\langle \mathcal M_1, \mathcal M_1 \rangle_s ds]<\infty,
\end{equation} for each $t>0$. For $1<n\leq p$ one can use \eqref{eq: Mn} to obtain the estimate
\[
\langle \mathcal M_n,\mathcal M_n\rangle= 4\sum_{i=1}^p\lambda_i(t)  (e_{n-1}^{\bar i})^2\leq  4 e_{n-1}(t)\leq 4 e_1 ^{2n-2}(t),
\]
and thus, by \eqref{eq M1},  one obtains \eqref{finite qvar}.
	\end{proof}
Since a Wishart process is $\bar{\mathcal{S}}_p^+$ valued by definition, so $e_n\geq 0$, for all $n=1,\dots,p$. The idea
of the proof of the next Theorem is to show that for $(x_0,\beta)\not \in W$, some of the symmetric polynomials $e_n$ become strictly negative.

\subsection{Solving the Wishart stochastic differential equations}

This section gives a full characterization of the existence of solutions to Wishart SDEs \eqref{eq:Wishart:SDePLUS}.

\begin{theorem}\label{char sdes}
Let $\alpha\geq 0$, and $x_0\in \bar{\mathcal S}_p^+$. The following are equivalent.
\begin{enumerate}
\item \label{ax} The SDE \eqref{eq:Wishart:SDePLUS} has a global weak solution with $X_0=x_0$.
\item \label{bx} $\alpha\geq p-1$, or $\alpha\in \{0,1,\dots, p-2\}$ and $\rank(x_0)\leq \alpha$.
\end{enumerate}
\end{theorem}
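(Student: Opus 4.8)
The plan is to prove the two implications separately, both resting on the system \eqref{eq:e1:SDEs}--\eqref{eq:polynom_last:SDEs} for the elementary symmetric polynomials $e_n(X_t)$ together with the integrability \eqref{finite qvar}. For necessity (\ref{ax} $\Rightarrow$ \ref{bx}) I will follow the announced idea: assuming \ref{bx} fails, I exhibit an index $n$ for which $\mathbb E[e_n(X_t)]$ goes negative as $t\to\infty$, contradicting $e_n(X_t)\ge 0$, which holds because $X_t\in\bar{\mathcal S}_p^+$. For sufficiency (\ref{bx} $\Rightarrow$ \ref{ax}) I will split into the classical regime $\alpha\ge p-1$ and the integer low-rank regime, constructing the solution explicitly in the latter.

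For necessity, since each $\mathcal M_n$ is a true martingale by \eqref{finite qvar}, taking expectations in \eqref{eq:polynom_first:SDEs} removes the stochastic integral and yields the closed recursion
$$\mathbb E[e_n(X_t)] = e_n(x_0) + (p-n+1)(\alpha-n+1)\int_0^t \mathbb E[e_{n-1}(X_s)]\,ds,\qquad n=1,\dots,p,\ e_0\equiv 1.$$
Assume \ref{bx} fails, so $0\le\alpha<p-1$, and set the critical index $n^\ast=\lfloor\alpha\rfloor+2\le p$. Its coefficient $(p-n^\ast+1)(\alpha-n^\ast+1)$ is strictly negative (because $n^\ast>\alpha+1$ while $p-n^\ast+1\ge 1$), so $\mathbb E[e_{n^\ast}(X_t)]$ is driven down at a rate proportional to $\mathbb E[e_{n^\ast-1}(X_s)]$. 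The heart of the argument is to show $\mathbb E[e_{n^\ast-1}(X_s)]$ stays bounded away from $0$, and here two mechanisms appear. If $\alpha$ is not an integer, every coefficient $(p-j+1)(\alpha-j+1)$ for $j\le n^\ast-1$ is strictly positive, so an induction on $j$ starting from $\mathbb E[e_1(X_t)]=e_1(x_0)+p\alpha t>0$ gives $\mathbb E[e_{n^\ast-1}(X_t)]>0$. If $\alpha$ is an integer (then $\rank(x_0)>\alpha$, since \ref{bx} fails), the coefficient at $n^\ast-1=\alpha+1$ vanishes, hence $\mathbb E[e_{\alpha+1}(X_t)]\equiv e_{\alpha+1}(x_0)>0$, the positivity coming precisely from $\rank(x_0)\ge\alpha+1$. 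In either case $\int_0^t\mathbb E[e_{n^\ast-1}]\,ds$ grows at least linearly, forcing $\mathbb E[e_{n^\ast}(X_t)]\to-\infty$, the required contradiction.

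For sufficiency, when $\alpha\ge p-1$ I invoke the classical existence of a global weak solution (Bru \cite{bib:b91}; cf.\ Proposition \ref{stochWallach}). When $\alpha=m\in\{0,\dots,p-2\}$ and $\rank(x_0)\le m$, I construct the solution directly: choose $b_0\in\R^{p\times m}$ with $b_0b_0^T=x_0$ (possible exactly because $\rank(x_0)\le m$), let $\mathcal B_t=b_0+\beta_t$ for a $p\times m$ matrix $\beta$ of independent standard Brownian motions, and put $X_t=\mathcal B_t\mathcal B_t^T$. This process is globally defined and $\bar{\mathcal S}_p^+$-valued by construction, and Itô's formula gives $dX_t=d\mathcal B_t\,\mathcal B_t^T+\mathcal B_t\,d\mathcal B_t^T+mI\,dt$; a direct computation shows the quadratic covariation of the martingale part coincides with that of $\sqrt{X_t}\,dW_t+dW_t^T\sqrt{X_t}$, so a $p\times p$ Brownian motion $W$ driving \eqref{eq:Wishart:SDePLUS} exists on a suitable extension, exhibiting $X$ as a global weak solution with drift parameter $\alpha=m$ and $X_0=x_0$.

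The main obstacle is the necessity half, specifically the claim that $\mathbb E[e_{n^\ast-1}(X_t)]$ does not decay to $0$: this is exactly where the rank hypothesis enters and where the two regimes (non-integer $\alpha$, versus integer $\alpha$ with oversized rank) demand different arguments. A secondary point, already settled by Proposition \ref{prop:Poly}, is that \eqref{finite qvar} legitimizes discarding the martingale terms after taking expectations; without it the recursion above would be invalid. In the sufficiency half the only delicate step is verifying that $X_t=\mathcal B_t\mathcal B_t^T$ solves the SDE in its stated form $\sqrt{X_t}\,dW_t+dW_t^T\sqrt{X_t}$ rather than merely possessing the correct generator, which is handled by the standard bracket-matching and martingale representation argument.
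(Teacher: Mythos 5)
Your proposal is correct and follows essentially the same route as the paper: necessity via the moment recursion $\ex e_n(t)=e_n(x_0)+(p-n+1)(\alpha-n+1)\int_0^t \ex e_{n-1}(s)\,ds$ from Proposition \ref{prop:Poly} and \eqref{finite qvar}, with the same two cases (non-integer $\alpha$ giving a negative leading coefficient, integer $\alpha$ giving $\ex e_{\alpha+1}\equiv e_{\alpha+1}(x_0)>0$ and then a linearly decreasing $\ex e_{\alpha+2}$), and sufficiency via Bru for $\alpha\ge p-1$ and the sum-of-squares construction $\sum_i(y_i+B_i)(y_i+B_i)^\top$ combined with the bracket-matching argument of Proposition \ref{prop wish semi} for integer $\alpha$.
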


\begin{proof}

Assume first \ref{ax}. If $\alpha\geq p-1$, nothing has to be shown. Suppose, therefore, $\alpha<p-1$. 
Recall {formulas  \eqref{eq:e1:SDEs}--\eqref{finite qvar}}
 from Proposition \ref{prop:Poly}. One can compute explicitly the expected value of the polynomials starting from the first one,
\begin{eqnarray*}
\ex e_1(t) = e_1(0)+p\alpha \int_0^t ds = e_1(0)+p\alpha t.
\end{eqnarray*}
Therefore
\begin{eqnarray*}
\ex e_2(t) &=& e_2(0)+(p-1)(\alpha-1)\int_0^t \ex e_1(s)ds\\
&=& e_2(0)+(p-1)(\alpha-1)e_1(0)t+p(p-1)\alpha(\alpha-1)\frac{t^2}{2},
\end{eqnarray*}
and so on. Consequently $\ex e_n(t)$ is a polynomial of degree not greater than $n$. In particular, the coefficient of $t^n$ is
\begin{eqnarray*}
\frac{p(p-1)\cdot\ldots\cdot(p-n+1)\cdot\alpha(\alpha-1)\cdot\ldots\cdot (\alpha-n+1)}{n!}.
\end{eqnarray*}
If $\alpha\notin B$ and $n$ is the first integer greater than or equal to $\alpha+1$, then $\ex e_n(t)$ is a polynomial of degree $n$ such that the leading coefficient is negative. Consequently, it cannot stay positive for every $t>0$, which is an impossibility.

\medskip

If $\alpha=m\in B$, consider $\ex e_n(t)$ where $n=m+1$. Then
\begin{eqnarray*}
\ex e_n(t) = e_n(0)+(p-n+1)(\alpha-n+1)\int_0^t \ex e_{n-1}(s)ds = e_n(0).
\end{eqnarray*}
If $e_n(0)> 0$, then 
\begin{eqnarray*}
\ex e_{n+1}(t) = e_{n+1}(0)+(p-n)(\alpha-n)e_n(0)t,
\end{eqnarray*} 
i.e. the leading term is negative and thus $\ex e_{n+1}(t)<0$ for large $t$. It implies $e_n(0)=0$, i.e. $\rank(x_0)\leq n-1=m=\alpha$. 

Proof of \ref{bx} $\Rightarrow$ \ref{ax}:

The existence of global weak solutions 
 for $\alpha\geq p-1$ is  { proved by 
Bru \cite{bib:b91} (Bru's  proof for $\alpha > p-1$   can be easily 
	extended to  $\alpha \ge p-1$) 
and in 
\cite[Theorem 2.6]{bib:CFMT}.} 
Therefore, only the cases $\alpha\in \{0,1,\dots,p-2\}$ need to be considered. If $\alpha=0$, then $X=0$
is the global weak solution of \eqref{eq:Wishart:SDePLUS}, for initial value $x_0=0$. Let therefore $1\leq \alpha\leq p-2$, and $\rank(x_0)\leq \alpha$. Let $B_1,B_2,\dots, B_\alpha$ be a sequence of independent, $p$--dimensional standard Brownian motions, and let $y_1,\dots, y_\alpha\in\mathbb R^p$ {be} such that $x_0=
y_1 y_1^\top+\dots y_\alpha y_\alpha^\top$. Then the process
\[
X_t:=\sum_{i=1}^\alpha(y_i+B_i)(y_i+B_i)^\top
\]
is a continuous semimartingale, by construction, and $X_0=x_0$. Furthermore $dX_t=dM_t+\alpha I dt$,
where $I$ is the $p\times p$ unit matrix, and $(M_t)_t$ is a continuous martingale having quadratic
variation \eqref{eq: qvar}. Therefore, by Proposition \ref{prop wish semi}, the Wishart SDE \eqref{eq:Wishart:SDePLUS}
has a global weak solution.
\end{proof}

\begin{remark}
	 Necessity of {\rm (ii)} can be also proved, if the validity of the NCGS Conjecture is assumed (a fact that is proven in Section \ref{sec em}, and which has not been used above to keep the section self-contained). Suppose the existence of a weak solution. Then by Proposition \ref{stochWallach}, the solution is Wishart distributed,
that is, for each $t\geq 0$, $X_t\sim\Gamma_p(\alpha/2,x_0;2t I)$. By the NCGS Conjecture, $\alpha/2\in W_0$ and, in addition, if $\alpha<p-1$ then $\rank(x_0)\leq \alpha$.
\end{remark}
\section{The NCGS Conjecture and Wishart Semigroups}\label{sec em}
In this section Wishart semigroups are introduced, which are the main tool for the proof of the NCGS Conjecture in Section \ref{proof NCGS} below. In Section \ref{char low rank} all Wishart semigroups on lower rank matrices are characterized.
\subsection{Wishart semigroups}
For $p\geq 1$, let $D_p(k)\subset \bar{\mathcal S}_p^+$ be the sub-cones of rank $\leq k$ matrices, 
$0\leq k\leq p$, where clearly $D_p(0)=\{0\}$ and $D_p(p)=\bar{\mathcal S}_p^+$. Denote by
$f_u(x)=\exp(\tr(-ux))$, where $u,x\in \bar{\mathcal S}_p^+$.
\begin{definition}
Let $D\subset \bar{\mathcal S}_p^+$ be a closed set. A Wishart semigroup $(P_t)_{t\geq 0}$ on $D$ is a positive, strongly continuous $C_0(D)$ contraction semigroup which for any $u\in\mathcal S_p^+$ acts on
$f_u\mid_D$ as
\begin{equation}\label{eq:action P}
P_t f_u(x)=\det(I+2tu)^{-\alpha/2} e^{-\tr(x (u^{-1}+2 tI)^{-1})}, \quad x\in D.
\end{equation}
Here $\alpha\geq 0$ is called the drift parameter of $(P_t)_{t\geq 0}$.
\end{definition}

Note: A Wishart semigroup may or may not exist, depending on the choice of $\alpha$ and $D$. In Theorem
\ref{nude semigroups} below, the existence of Wishart semigroups for $D=D_p(k)$ is characterized.

The following remark summarizes several essential properties of Wishart semigroups:
\begin{remark}\label{rem1x}
Let $(P_t)_{t\geq 0}$ be a Wishart semigroup with drift parameter $\alpha$.
\begin{enumerate}
\item \label{rem1x1} (Markovian representation)
In view of the Riesz representation theorem for positive functionals \cite[Chapter 2.14]{Rudin}, for each $t>0$, $x\in D$
there exists a positive measure $p_t(x,d\xi)$ on $D$ such that
\begin{equation}\label{eq: action P}
P_t f(x)=\int _D f(\xi)p_t(x,d\xi).
\end{equation}
Furthermore, the semigroup property of $(P_t)_{t\geq 0}$ implies, that $p_t(x,d\xi)$ satisfies
the Chapman-Kolmogorov equations, thus $p_t(x,d\xi)$ is a Markov transition function. Hence,
the semigroup has a stochastic representation as a Markov process $(\mathbb P^x)_{x\in D}$,
where for each $x\in D$, $\mathbb P^x$ denotes the resulting probability on the canonical path space $D^{\mathbb R_+}$ with initial law $\delta_x$, and $X_t(\omega):=\omega(t)$, where $\omega\in D^{\mathbb R_+}$.
{ The Markov process $(X, \mathbb P^x)$ is called the {canonical representation} of the semigroup $ (P_t)_{t\geq 0}$ .}
\item (C\`adl\`ag Paths) It is a well-established fact, that any Feller process (that is, a Markov process with strongly continuous $C_0$ semigroup),
has a c\`adl\`ag version. 
\item (Affine Property) By definition, Wishart semigroups are affine semigroups (see \cite{bib:CFMT}), that is, the Laplace transform of their transition function
is of the form
\begin{equation}\label{ap}
\mathbb E[e^{-\tr(u X_t)}\mid X_0=x]=e^{-\phi(t,u)-\tr(\psi(t,u) x)},
\end{equation}
where
\[
\phi(t,u)=\frac{\alpha}{2} \log(\det(I+2t u)),\quad \psi(t,u)=(u^{-1}+2tI)^{-1}.
\]
\item (Wishart transition function) By definition, the Markovian transition function of a Wishart semigroup $p_t(x,d\xi)$ is $\Gamma_p(\alpha/2,x ; 2tI)$ distributed, for each $t\geq 0$ and for all $x\in D$. Furthermore,
the support of $\Gamma_p(\alpha/2,x ; 2tI)$ is contained in $D$.
\item (Non-Explosion)
$(P_t)_{t\geq 0}$ is conservative: Let $u_n\in\mathcal {S}_p^+$
such that $u_n\rightarrow 0$ as $n\rightarrow \infty$. By \eqref{eq: action P} and Lebesgue's dominated convergence theorem one thus has
\[
P_t 1=\lim _{n\rightarrow\infty}P_t f_{u_n}(x)=1.
\]
\item (Semimartingales and Continuity) If, in addition, one assumes that the linear span of $D$ has non-empty interior, $(X,\mathbb P_x)$ for each $x$ is an affine semimartingale, that is, a semimartingale with differential characteristics which are affine functions in the state variable.
{The continuity of the sample paths of $X$ follows}. For more details, see Appendix \ref{appendix a}.
\item (Strong Maximum Principle)
For a strongly continuous $C_0$ semigroup $(P_t)_{t\geq 0}$ with infinitesimal generator $\mathcal A$, the following are equivalent
\begin{enumerate}
\item \label{first im} $\mathcal A$ satisfies the strong maximum principle, that is, $\mathcal Af(x_0)\geq 0$, for any $f\in C_0$ that satisfies $f(x)\geq f(x_0)$.
\item \label{se im} $(P_t)_{t\geq0}$ is positive (hence a Feller semigroup).
\end{enumerate}
The proof of {(a) $\Rightarrow$ (b)} is simple. A proof of the non-trivial implication (b) $\Rightarrow$ (a) employs the positivity of the Yoshida approximations of $\mathcal A$ (\cite[Corollary 2.8]{Kurtz}).
\end{enumerate}
\end{remark}
Wishart semigroups on $D=\bar{\mathcal S}_p^+$ are well understood; they are the semigroups associated with affine diffusion processes on $D$.  By \cite[Theorem 2.4]{bib:CFMT} the following are equivalent:
\begin{itemize}
\item The Wishart semigroup with drift parameter $\alpha$ exists with state space $D=\bar{\mathcal S}_p^+$.
\item $\alpha\geq p-1$.
\end{itemize}
However, for strict subsets $D\subset\bar{\mathcal S}_p^+$, less is known about Wishart semigroups. In Theorem \ref{nude semigroups}
below a new result for the sets of rank $k\leq p-1$ matrices is given.

Let $\mathcal S^*_p$ be the space of rapidly decreasing smooth functions on $\mathcal S_p$, and
for a subset $D\subseteq \bar{\mathcal S}_p^+$, let
$S^*_p(D)=\{f\mid_{D}\mid f\in \mathcal S_p^*\}$.

For any $f\in \mathcal S^*_p(D)$, the action of the following differential operator
is well-defined,
\begin{equation}\label{A sharp}
\mathcal A^\sharp f(x)=2\tr(x\nabla^2 )f(x)+\alpha \tr(\nabla f(x)),
\end{equation}
where the notation of Bru \cite{bib:b91}
\[
x\nabla^2:=x\cdot \nabla\cdot \nabla
\]
is used, with $\cdot$ denoting the matrix multiplication, and $\nabla$ being the matrix of partial differential operators
$\nabla=(\nabla_{ij})_{ij}$, where $\nabla_{ij}=\frac{\partial}{\partial x_{ij}}$. This expression
reads in canonical coordinates, (cf. the notation of \cite[Theorem 2.4]{bib:CFMT})
\[
2\tr(x\nabla^2)=\sum_{i,j,k,l} A(x)_{i,j,k,l}\frac{\partial ^2}{\partial x_{ij}\partial x_{kl}},
\]
where $A(x)$ is a quadratic form on $(\bar{\mathcal S}_p^+)^2$, defined in coordinates as
\[
A(x)_{i,j,k,l}=x_{ik}\delta_{jl}+x_{il}\delta_{jk}+x_{jk}\delta_{il}+x_{jl}\delta_{ik}.
\]

\begin{proposition}\label{prop genx}
Suppose $D_p(1)\subseteq D$, and let $(P_t)_{t\geq 0}$ be a Wishart semigroup on $D$ with infinitesimal generator $\mathcal A$. Then $S^*_p(D)\subset \mathcal D(\mathcal A)$ and $\mathcal Af=\mathcal A^\sharp f$ in \eqref{A sharp} for any $f\in S^*_p(D)$.
\end{proposition}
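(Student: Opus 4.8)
The plan is to prove the Dynkin-type identity
\[
P_t f(x)-f(x)=\int_0^t P_s(\mathcal A^\sharp f)(x)\,ds,\qquad f\in S^*_p(D),\ x\in D,
\]
and then deduce the generator statement from it. The hypothesis $D_p(1)\subseteq D$ is used first to unlock the structural facts of Remark~\ref{rem1x}: since every symmetric matrix is a real linear combination of rank-one matrices $yy^\top$, the linear span of $D$ contains the span of $D_p(1)$, namely all of $\mathcal S_p$, and in particular has non-empty interior. Hence, by the Semimartingales and Continuity property (Remark~\ref{rem1x}(vi)), the canonical representation $(X,\mathbb P^x)$ is a conservative affine semimartingale with continuous sample paths. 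Its differential characteristics are read off from the affine property (Remark~\ref{rem1x}(iii)): differentiating \eqref{ap}, equivalently \eqref{eq:action P}, at $t=0$ and using $\partial_t\phi(0,u)=\alpha\tr(u)$ and $\partial_t\psi(0,u)=-2u^2$ gives $\partial_t P_tf_u(x)\big|_{t=0}=\bigl(2\tr(xu^2)-\alpha\tr(u)\bigr)f_u(x)$; a direct computation shows the right-hand side equals $\mathcal A^\sharp f_u(x)$, which simultaneously fixes the constants in \eqref{A sharp} and identifies the drift $\alpha I$ and the diffusion form $A(x)$ of $X$.

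With the characteristics in hand, I would apply It\^o's formula to $f(X_t)$ for a Schwartz extension of $f\in S^*_p(D)$. Because $X$ is a continuous semimartingale with the above affine characteristics, It\^o yields $f(X_t)=f(x)+\mathcal M_t+\int_0^t\mathcal A^\sharp f(X_s)\,ds$, where the bracket of the local martingale $\mathcal M$ is controlled by $\nabla f$ (bounded, since $f$ is Schwartz) contracted against the quadratic variation of $X$. As $A(x)$ is linear in $x$ and $X_s$ has finite first moment by the integrability bound behind \eqref{finite qvar} (cf.\ \eqref{eq M1}), one gets $\mathbb E^x\langle\mathcal M\rangle_t<\infty$, so $\mathcal M$ is a genuine martingale; taking $\mathbb E^x$ produces the displayed identity. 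I would also record that $\mathcal A^\sharp f\in C_0(D)$: the entries of $\nabla f$ and $\nabla^2 f$ are again Schwartz, so $\tr(\nabla f)$ and $x\mapsto\tr(x\nabla^2 f)$ are rapidly decreasing and hence vanish at infinity on $D$.

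To conclude, set $g:=\mathcal A^\sharp f\in C_0(D)$. Strong continuity of $(P_t)_{t\geq0}$ gives
\[
\Bigl\|\tfrac1t\textstyle\int_0^t P_s g\,ds-g\Bigr\|_\infty\le\tfrac1t\int_0^t\|P_s g-g\|_\infty\,ds\longrightarrow 0\quad(t\downarrow 0),
\]
and combining this with the Dynkin identity shows $\|t^{-1}(P_t f-f)-\mathcal A^\sharp f\|_\infty\to0$, i.e.\ $f\in\mathcal D(\mathcal A)$ and $\mathcal Af=\mathcal A^\sharp f$. This argument also dissolves the apparent ambiguity that $\mathcal A^\sharp f$ might depend on the chosen Schwartz extension on the (possibly lower-rank) set $D$: the limit on the left depends only on $f|_D$, so the identity forces $\mathcal A^\sharp f$ to be independent of the extension on $D$.

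The step I expect to be the main obstacle is the rigorous passage from the affine Laplace transform to the pathwise It\^o/Dynkin identity: one must verify that the diffusion characteristic of $X$ on the possibly lower-rank set $D$ is exactly the form $A(x)$, and that the local martingale in It\^o's formula is a true martingale so that expectations may be taken. A purely analytic alternative, bypassing the semimartingale theory, is to expand $f\in S^*_p(D)$ by Fourier inversion into characters $e^{i\tr(vx)}$, analytically continue \eqref{eq:action P} to imaginary argument to obtain the characteristic function $\chi_t(x,v)=e^{-\phi(t,-iv)-\tr(\psi(t,-iv)x)}$, and differentiate under the integral; there the difficulty migrates to securing Taylor-remainder bounds uniform in $x$ over the unbounded cone, which one controls through $|\chi_t(x,v)|\le 1$ together with the rapid decay of $\hat f$.
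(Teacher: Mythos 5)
Your proof is correct, but it takes a genuinely different route from the paper's. Both arguments begin with the same computation: differentiating the affine formula \eqref{eq:action P} at $t=0$ to obtain $F(u)=\alpha\tr(u)$ and $R(u)=-2u^2$, and checking that $\bigl(2\tr(xu^2)-\alpha\tr(u)\bigr)f_u(x)=\mathcal A^\sharp f_u(x)$. From there the paper stays entirely at the semigroup level: since $D_p(1)\subseteq D$ forces the convex hull of $D$ to equal $\bar{\mathcal S}_p^+$, the affine coefficients $F$ and $R$ are uniquely determined, giving $\mathcal A f_u^D=(\mathcal A^\sharp f_u)\mid_D$ on exponentials, and the extension to all of $S^*_p(D)$ is purely analytic, via the density of the linear hull of such exponentials in the Schwartz class (\cite[Theorem B.3]{bib:CFMT}) together with the closedness of the generator $\mathcal A$. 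You instead go through path space: the semimartingale representation of Remark \ref{rem1x} and Proposition \ref{prop a22}, It\^o's formula applied to a Schwartz extension, the Dynkin identity, and strong continuity. This is exactly the strategy the paper reserves for Proposition \ref{prop GM1}, where the test functions are polynomials and no density-plus-closedness argument is available; applied to Schwartz functions it works just as well. What your route buys: the Dynkin identity is stronger than the bare generator identity, and, as you observe, it makes transparent that $\mathcal A^\sharp f\mid_D$ is independent of the chosen Schwartz extension --- a point the paper leaves implicit. What the paper's route buys: it avoids the Cuchiero--Teichmann path-space machinery \cite{Cpaths} altogether, and no martingale or integrability verifications are needed. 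One dependency you must state explicitly to keep your argument sound: Proposition \ref{prop a22}, which underlies Remark \ref{rem1x}(vi), is proved in the paper by citing ``the computations in the proof of Proposition \ref{prop genx}'' --- namely the regularity statement that $\phi,\psi$ are differentiable at $t=0$ with derivatives \eqref{eq F} and \eqref{eq R}. Your proof escapes circularity only because you re-derive those derivatives yourself before invoking the appendix; this should be said in so many words. Finally, a simplification: since $f$ and $\mathcal A^\sharp f$ are bounded, the local martingale $\mathcal M_t=f(X_t)-f(x)-\int_0^t\mathcal A^\sharp f(X_s)\,ds$ is bounded on bounded time intervals and hence a true martingale, so the moment bounds you import via \eqref{eq M1} and \eqref{finite qvar} are not needed.
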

\begin{proof}
It is first proved that
\begin{equation}\label{action fu}
\mathcal Af^D_u=(\mathcal A^\sharp f_u)\mid_D,
\end{equation}
for any exponential $f_u^D(\cdot):=e^{-\tr(u\cdot)}\mid_{D}$. Here the right hand side
involves differentiation on the open domain $\mathcal S_p$, and later restriction to $D$,
whereas on the left hand side $\mathcal A$ acts directly on 
{$f_u^D$}.

By the definition of the affine property \eqref{ap}, 
\begin{equation}\label{gen form 2}
\mathcal A f_u(x)=(F(u)+\text{tr}(R(u)x)f_u(x), \quad x\in D,
\end{equation}
for $f_u(x)=\exp(-\text{tr}(ux))$ and $u\in \bar{\mathcal S}_p^+$, and thus ${f_u^D}\in\mathcal D(\mathcal A)$.
Here 
\begin{equation}\label{eq F}
F(u)=\frac{\partial\phi(t,u)}{\partial t}|_{t=0}=\alpha\tr(u)
\end{equation}
and
\begin{equation}\label{eq R}
R(u)=\frac{\partial \psi(t,u)}{\partial t}|_{t=0}=-2 u^2, 
\end{equation}
where the differentiation rules for
inverse map and determinant (\cite[Proposition  III.4.2 (ii) and Proposition II.3.3 (i)]{bib:farautKOR}) have been used. The assumption that $D$ contains rank one matrices implies that the convex hull of $D$ equals $\bar{\mathcal S} _p^+$, and thus $F$ and $R$ are uniquely determined, as the coefficients of the affine (in the state variable $x$) function
\[
x\mapsto F(u)+\tr(x R(u)).
\]
A straightforward computation reveals that the action of $\mathcal A^\sharp$ on $f_u^D$
coincides with \eqref{gen form 2}, hence \eqref{action fu} holds.

According to the density argument \cite[Theorem B.3]{bib:CFMT}, the linear hull of such exponentials
for strictly positive definite $u$ is dense in the space of rapidly decreasing functions on $\bar{\mathcal S}_p^+$ and thus equality in \eqref{action fu} extends, by convergence  properties in the Schwarz class and the closedness of $\mathcal A$, to rapidly decreasing functons.
\end{proof}
Recall that a time-homogenous Markov process is polynomial if the action of its semigroup can be extended to polynomials of any order (\cite[Definition 2.1]{Cuchiero11}).
\begin{proposition}\label{prop GM1}
Suppose $(P_t)_{t\geq 0}$ is a Wishart semigroup supported on $D\subset \bar{\mathcal S}_p^+$ with drift $\alpha\geq 0$. $(P_t)_{t\geq 0}$ is polynomial and its infinitesimal generator acts on symmetric polynomials as follows
\begin{equation}\label{gm eq1}
\mathcal Ae_n(x)=(p-n+1)(\alpha-n+1)e_{n-1}(x),\quad x\in D,\quad 1\leq n\leq p.
\end{equation}
\end{proposition}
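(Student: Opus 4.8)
The plan is to establish the two assertions separately: first that $(P_t)_{t\geq 0}$ preserves the degree of polynomials (whence it is polynomial in the sense of \cite{Cuchiero11}), and second that its generator acts on the $e_n$ by the stated formula. The polynomial property I would read off directly from the explicit transition Laplace transform, whereas for the generator formula I would connect the abstract generator $\mathcal A$ to the second-order differential operator $\mathcal A^\sharp$ of \eqref{A sharp}, which was identified on the Schwartz class in Proposition \ref{prop genx}, and then invoke the drift computation already carried out in the proof of Proposition \ref{prop:Poly}.

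For the polynomial property, recall from the affine property \eqref{ap} (equivalently \eqref{Lap_Wish}) that
$$\mathbb E^x[e^{-\tr(uX_t)}]=e^{-\phi(t,u)-\tr(\psi(t,u)x)},$$
whose exponent is \emph{affine} in the state variable $x$. Differentiating $m$ times with respect to the entries of $u$ and evaluating at $u=0$ yields the mixed moments of order $m$ of $X_t$; by the product rule each differentiation either hits $\phi$ (producing a factor free of $x$) or the linear form $\tr(\psi x)$ (producing one factor linear in $x$), so every moment of order $m$ is a polynomial in $x$ of degree at most $m$. Hence $P_t$ maps polynomials of degree $\le m$ into polynomials of degree $\le m$, which is exactly the polynomial property; in particular $P_t e_n$ is a polynomial of degree $\le n$. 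Since the transition law is a (generalized) Wishart distribution, it has finite moments of every order, a fact I will use below.

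For \eqref{gm eq1}, I would argue that the canonical representation $(X,\mathbb P^x)$ of Remark \ref{rem1x} is a continuous affine semimartingale whose differential characteristics coincide with those of the Wishart SDE \eqref{eq:Wishart:SDePLUS} (drift $\alpha I$, diffusion given by the quadratic form $A(x)$); thus Proposition \ref{prop:Poly} applies to $X$, and $e_n(X_t)$ decomposes as a local martingale plus the drift $\int_0^t (p-n+1)(\alpha-n+1)e_{n-1}(X_s)\,ds$. The integrability \eqref{finite qvar}, together with the finite moments just noted, makes the martingale part a genuine martingale of zero mean, so taking expectations gives
$$P_t e_n(x)=e_n(x)+(p-n+1)(\alpha-n+1)\int_0^t P_s e_{n-1}(x)\,ds.$$
Since $s\mapsto P_s e_{n-1}(x)$ is continuous (it is a curve in a finite-dimensional space of polynomials), differentiating at $t=0$ yields $\mathcal A e_n(x)=(p-n+1)(\alpha-n+1)e_{n-1}(x)$, which is \eqref{gm eq1}; the algebraic identity $\mathcal A^\sharp e_n=(p-n+1)(\alpha-n+1)e_{n-1}$ underlying the drift is precisely the one verified in the proof of Proposition \ref{prop:Poly}.

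I expect the main obstacle to be the passage from the rapidly decreasing test functions of Proposition \ref{prop genx}, where $\mathcal A=\mathcal A^\sharp$ is known, to the unbounded polynomial $e_n$: one must show $e_n$ belongs to the extended domain and that the stochastic integral averages out. If I prefer to avoid asserting outright that the canonical process solves \eqref{eq:Wishart:SDePLUS}, I would instead localize by multiplying $e_n$ with smooth cutoffs $\chi_R\in\mathcal S^*_p$ equal to $1$ on $\{\|x\|\le 2R\}$, so that $(\chi_R e_n)\mid_D\in S^*_p(D)\subset\mathcal D(\mathcal A)$ and $\mathcal A(\chi_R e_n)=\mathcal A^\sharp e_n$ on $\{\|x\|<R\}$; applying Dynkin's formula up to the exit time of $\{\|x\|<R\}$ and letting $R\to\infty$ using non-explosion, path-continuity, and the finite moments (dominated convergence) recovers the same integral equation for $P_t e_n$. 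Either way the crux is the uniform integrability estimate that tames the unboundedness of $e_n$.
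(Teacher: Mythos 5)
Your proposal is correct and follows essentially the same route as the paper: the paper realizes the canonical Feller process as a continuous semimartingale with the Wishart characteristics and hence as a global weak solution of \eqref{eq:Wishart:SDePLUS} (Propositions \ref{prop a22} and \ref{prop wish semi}, via \cite{Cpaths}), then applies Proposition \ref{prop:Poly}, uses \eqref{finite qvar} to conclude the martingale parts are true martingales, takes expectations, and differentiates at $t=0$ --- exactly your main argument. The step you flag as the crux (passing from the abstract semigroup to an SDE solution) is precisely what the paper's Appendix \ref{appendix a} supplies, so your supplementary moment argument for the polynomial property and the cutoff/Dynkin fallback are sound but not needed.
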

\begin{proof}
By Proposition \ref{prop a22}, there is a version $(\widetilde X_t)_{t\geq 0}$  of $(X_t)_{t\geq 0}$ which is a Wishart semimartingale, and thus by Proposition \ref{prop wish semi} there exists a $d\times d$ dimensional Brownian motion $W$ such that the pair ($(\widetilde X_t)_{t\geq 0}, W$) constitutes a global weak solution of the Wishart SDE. Hence Proposition \ref{prop:Poly}
may be applied, that yields the SDE dynamics \eqref{eq:e1:SDEs}--\eqref{eq:polynom_last:SDEs}. By \eqref{finite qvar}, {$\int_0^t M_n dV_{n}$}
are true martingales, hence 
\[
\mathbb E^x[e_n(t)]=e_n(x)+(p-n+1)(\alpha-p+1)\mathbb E^x[\int_0^t e_{n-1}(s)ds],
\]
thus by Lebesgue's dominated convergence theorem,  
\[
\mathcal A e_n(x)=\lim_{t\downarrow 0}\frac{P_t e_n(x)-e_n(x)}{t}=(p-n+1)(\alpha-p+1)e_{n-1}(x).
\]

\end{proof}

An equivalence relation $\simeq$ on the space of random variables with values in $\bar{\mathcal S}_p^+$ is introduced by defining
$X\simeq Y$ if and only if for all $0\leq r\leq p$
\begin{center}
 $\mathbb P[\rank(X)=r]>0$ if and only if $\mathbb P[\rank(Y)=r]>0$  .
\end{center}

Three technical lemmas are useful:
\begin{lemma}\label{mayx}
Let $\beta\geq 0,\omega,\in \bar{\mathcal S}_p^+$ and $\Sigma\in\mathcal S_p^+$.
\begin{enumerate}
\item \label{x1} (linear automorphism) Let $\Sigma=q q^\top$, where $q$ is a real $p\times p$ matrix. If $X\sim\Gamma_p(\beta,\omega; I)$, then $Y=qXq^\top\sim \Gamma_p(\beta,q\omega q^\top; \Sigma)$ and $Y\simeq X$.
Conversely, $Y\sim\Gamma_p(\beta,q\omega q^\top; \Sigma)$ implies $X=q^{-1}Y (q^{-1})^\top\sim\Gamma_p(\beta,\omega;I)$.
\item \label{x2} (exponential family) If $X\sim\mu(d\xi)\sim\Gamma_p(\beta,\omega;I)$, then
for $v:=\Sigma^{-1}-I$ there exists a random variable $Y$ distributed as
\[
Y\sim \frac{\exp(\tr(v\xi))\mu(d\xi)}{\mathbb E[\exp(\tr(vX))]} \sim \Gamma_p(\beta,\Sigma\omega\Sigma;\Sigma)
\]\
and $Y\simeq X$. Conversely,  $Y\sim\Gamma_p(\beta,\Sigma\omega\Sigma;\Sigma)$ implies
that $X\sim\Gamma_p(\beta,\omega; I)$
\item \label{x3} If $X\sim\Gamma_p(\beta,\omega;\Sigma)$ then $\Gamma_p(\beta,{\tilde \omega; \tilde\Sigma})$ exists for any ${\tilde\omega}$ satisfying $\rank({\tilde\omega})\leq \rank(\omega)$ and for any ${\tilde\Sigma}\in\bar{\mathcal S}_p^+$.
\end{enumerate}
\end{lemma}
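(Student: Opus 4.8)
The plan is to read off the Laplace transform of $Y=qXq^\top$ directly from that of $X$. Since $\Sigma=qq^\top\in\mathcal S_p^+$, the matrix $q$ is invertible, and for $u\in\bar{\mathcal S}_p^+$ one has $\ex e^{-\tr(uY)}=\ex e^{-\tr(q^\top u q\,X)}$, so the Laplace transform of $Y$ at $u$ equals that of $X$ at $\tilde u=q^\top u q$. I would substitute $\tilde u$ into \eqref{FLT Mayerhofer Wishart} with scale $I$ and reduce matters to two matrix identities: the determinant identity $\det(I+q^\top u q)=\det(I+q q^\top u)=\det(I+\Sigma u)$, obtained from $\det(I+AB)=\det(I+BA)$, and the resolvent identity $q(I+q^\top u q)^{-1}=(I+q q^\top u)^{-1}q$, which follows from $(I+qq^\top u)q=q(I+q^\top u q)$. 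Feeding these into the non-centrality term and using cyclicity of the trace turns $\tr(\tilde u(I+\tilde u)^{-1}\omega)$ into $\tr(u(I+\Sigma u)^{-1} q\omega q^\top)$, so $Y\sim\Gamma_p(\beta,q\omega q^\top;\Sigma)$. Because $q$ is invertible, $X\mapsto qXq^\top$ preserves rank pointwise, giving $\rank(Y)=\rank(X)$ and hence $Y\simeq X$; the converse is the same computation run with $q^{-1}$, using $q^{-1}\Sigma(q^{-1})^\top=I$.

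\textbf{Part (ii).} Here I would realize $Y$ as an Esscher (exponential) transform of $X$. Writing $\nu(d\xi)=c^{-1}e^{\tr(v\xi)}\mu(d\xi)$ with $c=\ex e^{\tr(vX)}$, the Laplace transform of $\nu$ is the ratio $\mathcal L(\mu)(u-v)/\mathcal L(\mu)(-v)$. The tilt parameter is fixed by requiring that the central factor $\det(I+w)^{-\beta}$ be rescaled into $\det(I+\Sigma u)^{-\beta}$, i.e.\ by $I-v=\Sigma^{-1}$; then $I+(-v)=\Sigma^{-1}$ is positive definite, so $\ex e^{\tr(vX)}<\infty$ and the tilt is well defined for every $\Sigma\in\mathcal S_p^+$. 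I would then evaluate the ratio: the determinant factors give $\det(I+\Sigma u)^{-\beta}$ via $\det(\Sigma^{-1}+u)=\det(\Sigma^{-1})\det(I+\Sigma u)$, and the non-centrality exponents combine, after inserting $(\Sigma^{-1}+u)^{-1}=(I+\Sigma u)^{-1}\Sigma$ and using that $\Sigma u$ commutes with $(I+\Sigma u)^{-1}$, into exactly $-\tr(u(I+\Sigma u)^{-1}\Sigma\omega\Sigma)$. Thus $Y\sim\Gamma_p(\beta,\Sigma\omega\Sigma;\Sigma)$. Since the density $c^{-1}e^{\tr(v\xi)}$ is strictly positive and finite on $\bar{\mathcal S}_p^+$, $\mu$ and $\nu$ are mutually absolutely continuous, so they charge the same rank strata and $Y\simeq X$; the converse is the reverse tilt by $e^{-\tr(v\xi)}$.

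\textbf{Part (iii).} I would deduce this from (i) and (ii) together with a limiting argument. First, (i) and (ii) transport existence to the standard scale: $\Gamma_p(\beta,\omega;\Sigma)$ exists iff $\Gamma_p(\beta,\omega_0;I)$ exists for some $\omega_0$ with $\rank(\omega_0)=\rank(\omega)=:r$, because conjugation by an invertible $q$ and by $\Sigma$ both preserve the rank of the non-centrality parameter. Using the orthogonal case of (i) I may assume $\omega_0=\diag(d_1,\dots,d_r,0,\dots,0)$ with $d_i>0$. Composing (i) forward with a positive diagonal $q=\diag(\sqrt{a_i})$ and then (ii) backward returns to scale $I$ while replacing each $d_i$ by $d_i/a_i$; hence $\Gamma_p(\beta,\diag(d_1/a_1,\dots,d_r/a_r,0,\dots,0);I)$ is a genuine distribution for every choice of $a_i>0$. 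Letting the $a_i$ of the coordinates to be eliminated tend to $+\infty$ (and tuning the rest) sends the non-centrality to any prescribed $\diag(c_1,\dots,c_s,0,\dots,0)$ with $s\le r$; the Laplace transforms converge pointwise to a limit that is continuous at $u=0$, so by the continuity theorem for Laplace transforms on $\bar{\mathcal S}_p^+$ the limit is itself the transform of a probability measure. This yields $\Gamma_p(\beta,\hat\omega;I)$ for every $\hat\omega$ of rank $\le r$, and a final application of (i) with $\tilde\Sigma=qq^\top$, started from $\Gamma_p(\beta,q^{-1}\tilde\omega(q^{-1})^\top;I)$, produces $\Gamma_p(\beta,\tilde\omega;\tilde\Sigma)$; a singular $\tilde\Sigma\in\bar{\mathcal S}_p^+$ is reached by the same continuity argument, approximating $\tilde\Sigma$ by positive definite matrices.

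\textbf{Main obstacle.} The genuinely delicate point is the rank reduction in (iii): I must ensure that the pointwise limit of Laplace transforms of bona fide distributions is again a Laplace transform, i.e.\ that no mass escapes to infinity. Continuity of the limit at $u=0$ supplies the required tightness, but this is precisely the step where an \emph{existence} statement, rather than a formal identity of transforms, is produced, so it deserves the most care; by contrast, parts (i) and (ii) are exact algebraic identities between Laplace transforms.
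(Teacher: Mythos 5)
Your proposal is correct, and parts \ref{x1} and \ref{x2} follow essentially the paper's own argument: \ref{x1} is the same Laplace-transform computation via $\det(I+AB)=\det(I+BA)$ and cyclicity of the trace, and \ref{x2} is the same exponential (Esscher) tilt. Two remarks on \ref{x2}. First, your tilt parameter, fixed by $(I-v)^{-1}=\Sigma$, i.e.\ $v=I-\Sigma^{-1}$, is the correct one: the displayed formula in the lemma statement (density proportional to $\exp(\tr(v\xi))$ with $v:=\Sigma^{-1}-I$) contains a sign slip, and the paper's proof in effect tilts by $\exp(-\tr(v\xi))$ with $v=\Sigma^{-1}-I$, which agrees with your choice. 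Second, the one step you pass over too quickly is the inference \og $I+(-v)=\Sigma^{-1}\succ 0$, hence $\ex\, e^{\tr(vX)}<\infty$\fg{} (and, likewise, the evaluation of $\mathcal L(\mu)$ at $u-v$ and at $-v$ by the closed formula). Definition \ref{def wish} gives \eqref{FLT Mayerhofer Wishart} only for $u\in\bar{\mathcal S}_p^+$, whereas $-v$ generally has negative eigenvalues; finiteness of the measure-side integral there, and the validity of the formula on the maximal domain $-I+\mathcal S_p^+$, is precisely Proposition \ref{FLT maximal}, which the paper proves in the appendix via a one-dimensional analytic-continuation lemma. Your argument needs this as an explicit ingredient, in both directions of the tilt; it is a citable gap, not a flaw in the approach.

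For part \ref{x3} your route has the same skeleton as the paper's (compose \ref{x1} and \ref{x2} to move within a fixed rank, then approximate and invoke a continuity theorem), but the execution genuinely differs. The paper reaches an arbitrary $\omega_1$ of the same rank by alternating \ref{x1} and \ref{x2} with an abstractly chosen automorphism $q_a$ (using transitivity of the $GL_p$-action on fixed-rank positive semi-definite matrices), then reduces rank by taking $\omega_n\to\widetilde\omega$ with $\rank(\omega_n)=r$ and applying L\'evy's continuity theorem to the characteristic functions on $i\mathcal S_p$ — which again requires Proposition \ref{FLT maximal} to know those Fourier transforms have the stated form. You instead diagonalize, rescale by $\diag(\sqrt{a_i})$, return to scale $I$ via \ref{x2}, and send selected $a_i\to\infty$, using a continuity theorem for Laplace transforms on the cone. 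Your version makes the rank reduction an explicit computation rather than an appeal to transitivity, and since the Laplace transform on $\bar{\mathcal S}_p^+$ is available by definition, your limiting step does not need the Fourier--Laplace extension at all. The price is that the Laplace continuity theorem on a cone is less off-the-shelf than L\'evy's theorem; your observation that continuity of the limit at $u=0$ supplies the tightness is exactly what makes it work, so your argument is complete modulo that standard fact.
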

\begin{proof}
The equivalence relation in \ref{x1} holds, since any linear automorphism maintains the rank of matrices. 
The remaining claims in \ref{x1} follow from the following chain of identities, using the very definition of the Wishart distribution in terms of
its Laplace transform (using multiplicativity of the determinant and the cyclic property of the trace):
\begin{align*}
\mathbb E[e^{-\tr(u Y)}]&=\mathbb E[e^{-tr(u q X q^\top)}]=\mathbb E[e^{-\tr((q^\top u q)X)}]=(\det(I+ q^\top u q))^{-\beta}e^{-\tr(q^\top u q (I+q^\top u q)^{-1}\omega)}\\
&=(\det(I+\Sigma u))^{-\beta}e^{u (I+\Sigma u)^{-1} q\omega q^\top},
\end{align*}
i.e. $Y\sim\Gamma_p(\beta,q \omega q^\top; \Sigma)$.

Proof of \ref{x2}:
 Note that due to Proposition \ref{FLT maximal}, $v=-I+\Sigma^{-1}\in D(\mu)$ {\and} and \eqref{FLT Mayerhofer Wishart} holds for $v$. Hence the first part of the proof of \ref{x2} follows the lines of the proof of  \cite[Proposition 3.1 (ii)]{bib:mayerJMA}. Conversely, let $Y\sim\mu_1=\Gamma_p(\beta,\Sigma\omega\Sigma; \Sigma)$. Then $v_1=-\Sigma^{-1}+I\in D(\mu_1)$ and, after a few computations, one obtains
\[
\int e^{-\tr((u+v_1)\xi)}\mu_1(d\xi)=\left((\det(\Sigma))^{-\beta}e^{-\tr((\Sigma-I)\omega)}\right)(\det(I+u))^{-\beta} e^{-\tr(u(I+u)^{-1}\omega)},
\]
where the pre-factor is recognized as
\[
(\det(\Sigma))^{-\beta}e^{-\tr((\Sigma-I)\omega)}=\mathbb E[e^{-\tr(v_1 Y)}],
\]
and the second factor equals
\[
(\det(I+u))^{-\beta} e^{-\tr(u(I+u)^{-1}\omega)}=\mathbb E[e^{-\tr(uX)}]
\]
for $X\sim\Gamma_p(\beta,\omega;I)$.

Finally, for any ${ u}\in -\Sigma^{-1}+\mathcal S_p^+$, let
\[
\nu(d\xi):=\frac{\exp(-\tr({ u}\xi))\mu(d\xi)}{\mathbb E[\exp(-\tr(({ u}X))]}.
\]
Then $\nu(B)>0$ if and only if $\mu(B)>0$, for any Borel set $B\subset\bar{\mathcal S}_p^+$. Hence $Y\simeq X$ in \ref{x2}.

Proof of \ref{x3}: Let $\rank(\omega)=r$ with $0\leq r\leq p$. The following outlines the transformations that map
$\Gamma_p(\beta,\omega;\Sigma)$ onto $\Gamma_p(\beta,\omega_1;\Sigma_1)$. 

Suppose first $\rank(\omega_1)=r$ and that $\Sigma_1=q_1q_1^\top$ is of full rank. By properties
of the Natural Exponential Family \ref{x2}, one obtains $\Gamma_p(\beta,\Sigma^{-1}\omega \Sigma^{-1}; I)$. By \ref{x1} the transformation $\xi\mapsto q_a\xi q_a^{\top}$, where $q_a$ is an invertible but not necessarily symmetric matrix, yields $\Gamma_p(\beta,q_a\Sigma^{-1}\omega \Sigma^{-1} q_a^\top; \Sigma_a)$, where
$\Sigma_a:=q_a q_a^\top$. Again using \ref{x2} yields
\[
\Gamma_p(\beta,\Sigma_a^{-1}q_a\Sigma^{-1}\omega \Sigma^{-1} q_a^\top\Sigma_a^{-1}; I)=\Gamma_p(\beta,(q_a^{-1})^\top\Sigma^{-1}\omega \Sigma^{-1} q_a^{-1}; I)
\]
Finally, by \ref{x1}, the linear transformation $\xi\mapsto q_1 \xi q_1^{\top}$ yields
\[
\Gamma_p(\beta,q_1(q_a^{-1})^\top\Sigma^{-1}\omega \Sigma^{-1} q_a^{-1}q_1^\top; \Sigma_1)
\]
Note that $q_a$ has not been specified yet. Since the linear automorphism group acts transitively on $\bar{\mathcal S}_p^+$ and maintains ranks, there exists $q_a$
such that
\[
q_1(q_a^{-1})^\top\Sigma^{-1}\omega \Sigma^{-1} q_a^{-1}q_1^\top=\omega_1,
\]
and thus one obtains the existence of $\Gamma_p(\beta,\omega_1;\Sigma_1)$ for any invertible $\Sigma_1$ and any $\omega_1$ with $\rank(\omega_1)=r$.

Finally, let $\rank(\widetilde\omega)\leq \rank(\omega)=r$ and let $\widetilde\Sigma$ be not necessarily invertible.  Let $(\omega_n)_n$ be a sequence of non-centrality parameters $\omega_n$ such that 
$\lim_{n\rightarrow\infty}\omega_n=\widetilde\omega$, where $\rank(\omega_n)=r$ for each $n$, and let $(\Sigma_n)_n$ be a sequence of non-singular matrices $\Sigma_n$ such that $\lim_{n\rightarrow\infty }\Sigma_n= \widetilde\Sigma$.

By the previous arguments, 
\[
\Gamma_p(\beta,\omega_n;\Sigma_n)
\]
exists for any $n\in\mathbb N$. By Proposition \ref{FLT maximal}, for each $n$, the characteristic
functions are of the same form, and converge for any $u\in i\mathcal S_p$ as $n\rightarrow \infty$ to 
\[
\left(\det(I+\widetilde\Sigma u)\right)^{-\beta}e^{-\tr(u(I+\widetilde\Sigma
u)^{-1}\widetilde\omega)}
\] 
Hence, by L\'evy's continuity theorem, the limit is the characteristic function of a positive measure on $\bar{\mathcal S}_p^+$, namely $\Gamma_p(\beta,\widetilde\omega;\widetilde\Sigma)$.
\end{proof}
\begin{lemma}\label{extra supp}
Let $\Xi$ be a positive semi-definite random matrix supported on $D_p(r-1)$ and $\rank(\Xi)=r-1$ with nonzero probability, where $1\leq r\leq { p}$.
Let further $\eta\sim \mathcal N(\mu,\Sigma)$ with $\mu\in \mathbb R^p$ and with covariance matrix $\Sigma\in \mathcal S_p^+$. If $\Xi$ and $\eta$ are independent, then
$\rank(\Xi+\eta \eta^\top)=r$ with nonzero probability.
\end{lemma}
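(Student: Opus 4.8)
The plan is to condition on $\Xi$ and exploit that a nondegenerate Gaussian vector almost surely avoids any fixed proper linear subspace. First I would record the elementary linear-algebra fact that governs how a rank-one update changes the rank of a positive semidefinite matrix: for $A\in\bar{\mathcal S}_p^+$ and $v\in\mathbb R^p$ one has $\ker(A+vv^\top)=\ker(A)\cap v^\perp$, since $w^\top(A+vv^\top)w=0$ forces both $w^\top Aw=0$ and $v^\top w=0$ by positivity. Consequently $\rank(A+vv^\top)=\rank(A)+1$ precisely when $v\notin\operatorname{Im}(A)=(\ker A)^\perp$, because then $v$ is not orthogonal to $\ker A$, so the hyperplane $v^\perp$ cuts the dimension of $\ker A$ down by exactly one.

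Next I would introduce the event $E=\{\rank(\Xi)=r-1\}$, which has $\mathbb P(E)>0$ by hypothesis. On $E$ the range $\operatorname{Im}(\Xi)$ is a linear subspace of dimension $r-1\le p-1$, hence a proper subspace of $\mathbb R^p$ (this is where the assumption $r\le p$ is used). Since $\eta\sim\mathcal N(\mu,\Sigma)$ has a strictly positive-definite covariance $\Sigma$, its law is absolutely continuous with respect to Lebesgue measure, so $\eta$ assigns probability zero to any fixed proper subspace $V\subsetneq\mathbb R^p$. In particular, for every deterministic $A$ with $\rank(A)=r-1$ one has $\mathbb P(\eta\in\operatorname{Im}(A))=0$.

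The remaining step is to transfer this conditional statement to the joint law via independence. The set $\{(A,v): v\in\operatorname{Im}(A)\}=\{(A,v):\rank([A\,|\,v])=\rank(A)\}$ is Borel, the rank being a Borel function, so Fubini applies to the product measure $\mu_\Xi\otimes\mathcal N(\mu,\Sigma)$: integrating the inner probability, which vanishes identically on $\{\rank(A)=r-1\}$, gives $\mathbb P(\eta\in\operatorname{Im}(\Xi),\,E)=0$. Therefore $\mathbb P(\eta\notin\operatorname{Im}(\Xi),\,E)=\mathbb P(E)>0$, and on this event the rank identity of the first paragraph yields $\rank(\Xi+\eta\eta^\top)=(r-1)+1=r$. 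Hence $\mathbb P(\rank(\Xi+\eta\eta^\top)=r)\ge\mathbb P(E)>0$, as claimed.

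I expect the only delicate point to be the measure-theoretic bookkeeping, namely the joint measurability of $\{\eta\in\operatorname{Im}(\Xi)\}$ and the legitimacy of disintegrating along $\Xi$. This is handled cleanly once one observes that the conditional probability is identically zero on $E$, so no finer regularity of the map $\Xi\mapsto\operatorname{Im}(\Xi)$ is required and the argument reduces to the two elementary ingredients above.
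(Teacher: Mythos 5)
Your proof is correct, and while it shares the paper's overall skeleton --- condition on $\Xi$ and show that, for a fixed matrix of rank $r-1$, adding $\eta\eta^\top$ raises the rank to exactly $r$ almost surely --- the deterministic core is handled by a genuinely different mechanism. The paper normalizes the fixed matrix to $\Xi_0=\diag(I_{r-1},0)$ and constructs an explicit congruence matrix $V$ (whose entries involve quotients $\eta_i/\eta_{r-1+j}$ of the Gaussian coordinates) bringing $\Xi_0+\eta\eta^\top$ to the block-diagonal form $\diag\bigl(I_{r-1},(\eta\eta^\top)_{r\leq i,j\leq p}\bigr)$, and then reads off the rank from the almost surely rank-one lower block. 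You instead invoke the kernel identity $\ker(A+vv^\top)=\ker(A)\cap v^\perp$, valid for any $A\in\bar{\mathcal S}_p^+$, which reduces everything to the question of whether $\eta\in\operatorname{Im}(\Xi)$, settled because a nondegenerate Gaussian vector avoids every fixed proper subspace almost surely. Your route buys two things: it needs no loss-of-generality normalization and no division by Gaussian coordinates (the paper's $V$ is only defined off a null set, a point left implicit there), and your passage from the fixed-matrix case to random $\Xi$ is an explicit Fubini argument over the product measure, with the Borel measurability of $\{(A,v):v\in\operatorname{Im}(A)\}$ justified via the rank function --- whereas the paper asserts the conditional statement $\mathbb{E}[\rank(\Xi+\eta\eta^\top)\mid \rank(\Xi)=r-1]=r$ without spelling out the disintegration. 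What the paper's computation buys in exchange is an explicit exhibition of the block structure of $\Xi_0+\eta\eta^\top$; but as a proof of the lemma, your version is more elementary, more robust, and self-contained.
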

\begin{proof}
Assume first the constant case $\Xi=\Xi_0\in \bar{\mathcal S}_p^+$. Without loss of generality, one may assume $\Xi_0=\diag(I_{r-1},0)$, where $I_k$ is the $k\times k$ unit matrix. Define
\[
V=\left(\begin{array}{ll} I_{r-1} & -\Omega\\ 0& I_{p-r+1}\end{array}\right)
\]
with a $(r-1)\times (p-r+1)$ matrix $\Omega_{ij}=\delta_{ij}\frac{\eta_i}{\eta_{r-1+j}}$. Then
\[
V (\Xi_0 +\eta \eta^\top) V^\top=\diag(I_{r-1}, (\eta\eta^\top)_{r\leq i,j\leq p})
\]
and since $(\eta_k)_{r\leq k\leq p}\sim\mathcal N((\mu_k)_{r\leq k\leq p}, (\Sigma_{ij})_{r\leq i,j\leq p})$, it follows that $\eta \eta^\top$ has rank $1$ almost surely. Thus
$\rank(V(\Xi_0+\eta\eta^\top)V^\top)=r-1+1=r$ almost surely.

Now consider a random matrix $\Xi$. Clearly, $\rank(\Xi+\eta\eta^\top)\leq r$. The set $A_\Xi:=\{\rank(\Xi(\omega))=r-1\}$ is Borel,
since for $r=1$ it is precisely the set  $\{\tr(\Xi)=0\}$, and for $r>1$ one has $A_\Xi=\{e_{r-1}(\Xi)=0\}^c \cap \{ e_r(\Xi) =0\}$.
By assumption $\mathbb P[A_\Xi]>0$, thus the first part of the proof
implies
\[
\mathbb E[\rank(\Xi+\eta\eta^\top)\mid \rank(\Xi)=r-1]=r
\]
and thus $\rank(\Xi+\eta\eta^\top)=r$ almost surely on $A_\Xi$.
\end{proof}

\begin{lemma}\label{lem 1}
Suppose $\Xi_0\in \bar{\mathcal S}_p^+$ with $\rank(\Xi_0)= p-1$, and let $\Xi\sim\Gamma_p((p-1)/2,\Xi_0;\Sigma)$,
where $\Sigma$ is non-degenerate. Then
$\rank(\Xi)=p-1$ almost surely.
\end{lemma}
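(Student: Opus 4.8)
The plan is to realise $\Xi$ concretely as a Gaussian quadratic form and then argue that $p-1$ perturbed Gaussian vectors are almost surely linearly independent. Since $2\beta=p-1$ is a positive integer, I would first use the spectral decomposition $\Xi_0=\sum_{i=1}^{p-1}\lambda_i v_i v_i^\top$ (the $\lambda_i>0$ being the nonzero eigenvalues, which number exactly $p-1$ because $\rank(\Xi_0)=p-1$) to write $\Xi_0=\sum_{i=1}^{p-1}m_im_i^\top$ with $m_i:=\sqrt{\lambda_i}\,v_i$ pairwise orthogonal. By the representation recalled in the introduction (Johnson and Kotz \cite{john-kotz}), the random matrix $\widetilde\Xi:=\sum_{i=1}^{p-1}\xi_i\xi_i^\top$, where $\xi_i\sim\mathcal N_p(m_i,\Sigma/2)$ are independent, has Laplace transform \eqref{FLT Mayerhofer Wishart} with shape $(p-1)/2$ and non-centrality $\Xi_0$, hence $\widetilde\Xi$ and $\Xi$ have the same law. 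As $\rank$ is a measurable functional on $\bar{\mathcal S}_p^+$, it suffices to prove $\rank(\widetilde\Xi)=p-1$ almost surely.

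Writing $M:=[\xi_1\mid\cdots\mid\xi_{p-1}]$ for the $p\times(p-1)$ matrix of columns, one has $\widetilde\Xi=MM^\top$ and therefore $\rank(\widetilde\Xi)=\rank(M)$; this equals $p-1$ precisely when the columns $\xi_1,\dots,\xi_{p-1}$ are linearly independent. The task thus reduces to showing that $p-1$ independent, non-degenerate Gaussian vectors in $\R^p$ are almost surely linearly independent.

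To establish this I would argue inductively using Lemma \ref{extra supp}, which is tailored to exactly this situation. Set $\widetilde\Xi^{(k)}:=\sum_{i=1}^{k}\xi_i\xi_i^\top$ and claim $\rank(\widetilde\Xi^{(k)})=k$ almost surely for $0\le k\le p-1$. The case $k=0$ is trivial since $\widetilde\Xi^{(0)}=0$. For the inductive step, note that $\widetilde\Xi^{(k-1)}$ is supported on $D_p(k-1)$ and, by the induction hypothesis, satisfies $\rank(\widetilde\Xi^{(k-1)})=k-1$ with probability one; moreover $\xi_k\sim\mathcal N_p(m_k,\Sigma/2)$ is independent of $\widetilde\Xi^{(k-1)}$ and has full-rank covariance because $\Sigma$ is non-degenerate. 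Applying Lemma \ref{extra supp} with $r=k$, $\Xi=\widetilde\Xi^{(k-1)}$ and $\eta=\xi_k$ gives $\rank(\widetilde\Xi^{(k-1)}+\xi_k\xi_k^\top)=k$ almost surely on the event $\{\rank(\widetilde\Xi^{(k-1)})=k-1\}$, which has probability one; hence $\rank(\widetilde\Xi^{(k)})=k$ almost surely. Taking $k=p-1$ completes the argument.

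The argument is essentially routine once the Gaussian representation is in place; the one point requiring care is precisely the reduction to the construction $\widetilde\Xi$, which hinges on $2\beta=p-1$ being a nonnegative integer so that the quadratic-form representation applies. As an alternative, the linear-independence step can be proved directly: the joint law of $(\xi_1,\dots,\xi_{p-1})$ on $\R^{p(p-1)}$ is absolutely continuous, while the set of linearly dependent tuples is the common zero locus of the maximal minors of $M$, a proper algebraic subvariety and hence Lebesgue-null, so the dependence event has probability zero. This direct route shows, incidentally, that the hypothesis $\rank(\Xi_0)=p-1$ is not actually needed for the conclusion---non-degeneracy of $\Sigma$ alone forces full column rank almost surely---although it is the form in which the lemma will be applied.
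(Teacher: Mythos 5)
Your proposal is correct and follows essentially the same route as the paper: both exploit the fact that $2\beta=p-1$ is a positive integer to realize $\Gamma_p((p-1)/2,\Xi_0;\Sigma)$ as $\sum_{j=1}^{p-1}\xi_j\xi_j^\top$ with independent Gaussian columns (the Johnson--Kotz representation), and then reduce the claim to almost-sure linear independence of those columns. Two differences deserve comment. First, the paper begins by normalizing the scale, applying Lemma \ref{mayx} \ref{x1} with $q=\sqrt{\Sigma}$ to reduce to $\Sigma=2I$ before writing down the Gaussian squares; you skip this and work with general non-degenerate $\Sigma$, which is legitimate since the representation from the remark after Definition \ref{def wish} holds for arbitrary $\Sigma\in\mathcal S_p^+$. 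Second, for the rank step the paper argues exactly as in your ``alternative'' route: the matrix of columns has an absolutely continuous law on $\R^{p\times(p-1)}$, and the rank-deficient matrices form a Lebesgue-null set. Your primary route, the induction via Lemma \ref{extra supp}, carries a caveat: as \emph{stated}, that lemma concludes only that $\rank(\Xi+\eta\eta^\top)=r$ with nonzero probability, which is too weak to propagate ``almost surely'' through your induction; you are invoking the stronger conditional form ($\rank(\Xi+\eta\eta^\top)=r$ a.s.\ on the event $\{\rank(\Xi)=r-1\}$), which is indeed what the last line of that lemma's proof establishes, but not what its statement asserts. Since you supply the direct null-set argument as a self-contained fallback, the proposal stands as a whole. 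Finally, your closing remark should be tempered: the hypothesis $\rank(\Xi_0)=p-1$ can be weakened to $\rank(\Xi_0)\le p-1$, but not dropped altogether, since the $(p-1)$-square representation is unavailable when $\rank(\Xi_0)=p$; and in that case the conclusion can genuinely fail (already for $p=1$, where $\Gamma_1(0,\Xi_0;\Sigma)$ is a noncentral chi-square with zero degrees of freedom, which charges $(0,\infty)$).
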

\begin{proof}
By Lemma \ref{mayx} \ref{x1}, the automorphism $\xi\rightarrow q^{-1}\xi q^{-1}$ with $q=\sqrt{\Sigma}$ yields $q^{-1}\Xi q^{-1}\sim\Gamma_p((p-1)/2,q^{-1}\Xi_0 q^{-1};I)$,
and since $\rank(\Xi_0)=\rank(q^{-1}\Xi_0 q^{-1})$, and $\Xi\simeq q^{-1}\Xi q^{-1}$, one may without loss of generality assume $\Sigma=2I$.

 Let $\mu_i \in\mathbb R^p$
for $i=1,\dots,p-1$ such that $\mu_1 \mu_1^\top+\dots \mu_{p-1}\mu_{p-1}^\top=\Xi_0$. Let $x_{ij}$, $1\leq i \leq p$, $1\leq j \leq p-1$
be a sequence of independent standard normally distributed random variables, and set $x_j=(x_{ij})_{1\leq i \leq p}$
and $y_j=x_j+\mu_j$. Then (see \cite[Section 1]{bib:mayer}) the random variable
\[
X=\sum_{j=1}^{p-1} y_j y_j^\top
\]
is $\Gamma_p(\frac{p-1}{2},\Xi_0; 2 I)$ distributed. Furthermore, $x:=(x_{ij})_{ij}$ has rank $p-1$ almost surely, hence $X$
has rank $p-1$ almost surely, and thus also $\Xi$.
\end{proof}
The following statement concerns the support of Wishart distributions with general shape parameter.
\begin{proposition}\label{prop may}
Suppose $\beta\in \{0,1/2,\dots,(p-2)/2\}$ and $\Sigma \in\mathcal S_p^+$. Suppose $\rank(\omega)= 2\beta+k$, where $1\leq k\leq p-(2\beta+1)$. Then $\Gamma_p(\beta,\omega;\Sigma)$, if exists, is supported in $D_{p}({2\beta})$. In other words, almost surely,
\begin{equation}\label{eq: support}
\rank(\Xi)\leq {2\beta}
\end{equation}
for any $\Xi\sim \Gamma_p(\beta,\omega;\Sigma)$.
\end{proposition}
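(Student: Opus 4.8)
The plan is to boost the shape parameter up to the full-rank value $(p-1)/2$ by independent Gaussian rank-one perturbations, and then read off the rank constraint on $\Xi$ from the full-rank anchor of Lemma~\ref{lem 1} combined with the rank-bumping mechanism of Lemma~\ref{extra supp}. Write $n=2\beta\in\{0,1,\dots,p-2\}$ and $m=p-1-n$; note $m\ge k\ge 1$ and $\rank(\omega)=n+k\le p-1$. The structural fact I will exploit is the convolution rule: since the Laplace transform \eqref{FLT Mayerhofer Wishart} is multiplicative in the pair $(\beta,\omega)$ for fixed $\Sigma$, if $\Xi\sim\Gamma_p(\beta,\omega;\Sigma)$ and $\eta_1,\dots,\eta_m$ are independent, independent of $\Xi$, with $\eta_i\sim\mathcal N(\nu_i,\Sigma/2)$, then, recalling that $\Gamma_p(1/2,\nu\nu^\top;\Sigma)$ is exactly the law of $\eta\eta^\top$ for $\eta\sim\mathcal N(\nu,\Sigma/2)$ (the statistical description after Definition~\ref{def wish} with $n=1$),
\[
\Xi^{(m)}:=\Xi+\sum_{i=1}^m\eta_i\eta_i^\top\sim\Gamma_p\!\Big(\tfrac{p-1}{2},\,\Xi_0;\,\Sigma\Big),\qquad \Xi_0:=\omega+\sum_{i=1}^m\nu_i\nu_i^\top,
\]
because $\beta+m/2=(p-1)/2$.

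First I would fix the means $\nu_1,\dots,\nu_m$ so that $\rank(\Xi_0)=p-1$. This is an elementary linear-algebra step: starting from $\rank(\omega)=n+k$, one must adjoin $p-1-(n+k)=m-k$ new directions outside $\mathrm{range}(\omega)$, and since $m-k\ge 0$ and the ambient space has dimension $p>n+k$, this is feasible, while the remaining $k$ means may be set to $0$. The covariance of each $\eta_i$ is $\Sigma/2$, which is strictly positive definite because $\Sigma\in\mathcal S_p^+$, so Lemma~\ref{extra supp} applies to every perturbation regardless of the chosen means. With $\rank(\Xi_0)=p-1$ and $\Sigma$ non-degenerate, Lemma~\ref{lem 1} gives $\rank(\Xi^{(m)})=p-1$ almost surely.

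The final step is a rank count. Let $\rho:=\max\{r:\mathbb P[\rank(\Xi)=r]>0\}$ be the maximal rank attained by $\Xi$. Applying Lemma~\ref{extra supp} inductively — at each stage the partial sum is supported on matrices of rank at most its current maximal rank and attains that maximum with positive probability, while adjoining a full-covariance $\eta_i\eta_i^\top$ raises the attained maximum by \emph{exactly} one (as long as it is below $p$) and never raises the support rank by more than one — shows that the maximal rank of $\Xi^{(m)}$ equals $\min(\rho+m,p)$. Comparing with the almost sure value from Lemma~\ref{lem 1} yields $\min(\rho+m,p)=p-1$; since $p-1<p$, this forces $\rho+m=p-1$, hence $\rho=p-1-m=n=2\beta$. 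Therefore $\rank(\Xi)\le 2\beta$ almost surely, which is \eqref{eq: support}.

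The main obstacle I anticipate is the bookkeeping in the inductive use of Lemma~\ref{extra supp}: one must check at each of the $m$ stages that the current partial sum genuinely attains its maximal rank with positive probability, so that the hypothesis of Lemma~\ref{extra supp} is met, and that no stage prematurely reaches rank $p$, so that the maximal rank increases by exactly one per stage and the clean identity $\min(\rho+m,p)=p-1$ holds. A minor secondary point is to confirm the convolution identity and the identification of $\Gamma_p(1/2,\nu\nu^\top;\Sigma)$ with a Gaussian outer product; both are immediate from the multiplicativity of \eqref{FLT Mayerhofer Wishart} and the statistical representation recalled in the introduction.
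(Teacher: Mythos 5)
Your proof is correct and, for $\beta\in\{1/2,\dots,(p-2)/2\}$, it is essentially the paper's proof: convolve $\Xi$ with an independent sum of $m=p-1-2\beta$ Gaussian squares whose means are chosen so that the aggregate non-centrality parameter has rank exactly $p-1$, identify the convolution as $\Gamma_p\bigl(\tfrac{p-1}{2},\Xi_0;\Sigma\bigr)$, pin its rank at $p-1$ almost surely via Lemma \ref{lem 1}, and apply Lemma \ref{extra supp} inductively to cap $\rank(\Xi)$. The genuine difference is the case $\beta=0$: the paper removes it from this scheme and instead proves outright non-existence, by assembling from such laws a Wishart transition function with zero drift and invoking the admissible-drift condition for affine processes on $\bar{\mathcal S}_p^+$ from \cite[Theorem 2.4]{bib:CFMT}; you treat $\beta=0$ uniformly (with $m=p-1$ squares, forcing $\rho=0$, i.e.\ $\Xi=0$ a.s.), so your argument needs nothing beyond Lemmas \ref{lem 1} and \ref{extra supp} and the convolution identity. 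What your route buys is self-containedness and uniformity (plus the sharper byproduct that the maximal attained rank equals $2\beta$ exactly, under the ultimately vacuous existence hypothesis); what the paper's route buys at $\beta=0$ is the stronger conclusion of non-existence --- though that also follows from yours, since a law carried by $\{0\}$ is $\delta_0$, whose Laplace transform cannot equal \eqref{FLT Mayerhofer Wishart} with $\omega\neq 0$. Finally, your bookkeeping --- the maximal attained rank of the $j$-th partial sum equals $\min(\rho+j,p)$, because a rank-one addend raises rank by at most one while, for positive semi-definite summands, rank never decreases --- is exactly the induction the paper compresses into ``as otherwise $\rank(\Xi')>p-1$ with non-zero probability,'' and it is sound, including the verification that each partial sum attains its maximal rank with positive probability so that the hypothesis of Lemma \ref{extra supp} holds at every stage.
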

\begin{proof}
Suppose first $\beta=0$ and $\rank(\omega)\geq 1$. Then, also $\Gamma_p(0,\widetilde\omega;2t I)$ exists, with $\rank (\widetilde\omega)=1$, see Lemma \ref{mayx} \ref{x3}. Let $x\in \bar{\mathcal S}_p^+$, then one can write
\[
x=\sum_{i=1}^p\mu_i\mu_i^\top,\quad \mu_i\in \mathbb R^p
\]
Let $t>0$ be fixed. By Lemma \ref{mayx} \ref{x3}, there exist independent random variables $\Xi_i\sim \Gamma_p(\beta=0,\mu_i\mu_i^\top;2tI)$, for $i=1,\dots,p$, and therefore
\[
\Xi=\Xi_1+\dots+\Xi_p\sim \Gamma_p(0,x;2tI),
\]
and thus a transition function of a Wishart semigroup with zero drift is constructed, violating the drift
condition for affine Markov processes on $\bar{\mathcal S}_p^+$ \cite[Theorem 2.4 and Definition 2.3, equation (2.4)]{bib:CFMT} (which rules out drifts strictly below $(p-1)/2$). Thus
$\Gamma_p(\beta,\omega;\Sigma)$ does not exist.

Let now $\beta\in \{1/2,\dots,(p-2)/2\}$, then, since $2\beta+k\geq 2\beta+1\geq 2$, there is nothing to show when $p\leq 2$. Set therefore $p\geq 3$. Then,
\begin{itemize}
\item $\beta':=(p-1)/2-\beta$ satisfies $1/2\leq \beta'\leq (p-2)/2$.
\item Since
\[
2\leq \rank(\omega)= 2\beta+k\leq 2\beta +(p-(2\beta+1))=p-1
\]
there exists $\omega'\in \bar{\mathcal S}_p^+$ with $\rank(\omega')=(p-1)-\rank(\omega)=(p-1)-(2\beta+k)$ and such that 
$\omega_*:=\omega+\omega'$ satisfies $\rank(\omega_*)= p-1$. Furthermore, since
\[
\rank(\omega')= p-1-(2\beta+k)=2\beta'-k\leq 2\beta'
\]
a random variable $Y\sim \Gamma_p(\beta',\omega';\Sigma)$ exists, independent of $\Xi$: Let
$m_i\in\mathbb R^p$ $(i=1,\dots,{n:=2\beta'})$ such that
\[
m_1m_1^\top+\dots+ m_n m_n^\top=\omega'
\]
and $\xi_j$ $(j=1,\dots,n$) be a sequence of independent, normally distributed random variables with mean $m_j$, and variance $\Sigma/2$, and independent of $\Xi$.
Then $Y:=\xi_1\xi_1^\top+\dots+\xi_n\xi_n^\top \sim \Gamma_p(\beta',\omega';\Sigma)$, see the remark following Definition \ref{def wish}.

\end{itemize}
The sum $\Xi'=\Xi+Y$ is $\Gamma_p((p-1)/2,\omega_*,\Sigma)$ distributed. Since $\rank(\omega_*)=p-1$, Lemma \ref{lem 1} applies and yields
 $\rank(\Xi')= p-1$ almost surely. Thus, by Lemma \ref{extra supp} (applied exactly $2\beta'$ times, since $Y$ is constructed by a sum of $2\beta'$ squares of independent, normally distributed vectors) one must have $\rank(\Xi)\leq 2\beta$ almost surely, as otherwise $\rank(\Xi')>p-1$ with non-zero probability.
\end{proof}
\subsection{Proof of the NCGS Conjecture.}\label{proof NCGS}
Proof of $\Leftarrow$:
{ 
Sufficiency of conditions in NCGS Conjecture was shown for 
$2\beta\in B$ in  \cite[Chap.38 (47), p.175]{john-kotz} and for $2\beta>p-1$
in \cite{bib:b91}. The case $2\beta=p-1$ follows from the case 
$2\beta>p-1$ by L\'evy continuity theorem arguments \cite{bib:mayer,bib:mayerJMA}.}

Proof of $\Rightarrow$:  Conversely, suppose the existence of a single distribution $\Gamma_p(\beta,\omega;I)$. Then by Lemma \ref{mayx} \ref{x3}, also 
$\Gamma_p(\beta,0;I)$ exists. Since the latter is a classical Wishart distribution with non-degenerate scale parameter, $\beta\in W_0$, the classical Gindikin set. \newline Let $\beta\in \{0,1/2,\dots,(p-2)/2\}$
and assume, for a contradiction, $\rank(\omega)=2\beta+l$, where $1\leq l\leq p-2\beta$. By Lemma \ref{mayx} \ref{x3} one can obtain non-central Wishart distributions for $\Gamma_p(\beta,\omega';\Sigma)$ with any $\rank(\omega')\leq 2\beta+l$ and any invertible $\Sigma$. 

Using, in addition, the support information of Proposition \ref{prop may}, one thus obtains a Wishart semigroup $(P_t)_{t\geq 0}$ with state space $D_{p}(2\beta+l)$ and with drift $2\beta$, by creating $\Gamma_p(\beta,x;2t I)$, for each $t>0$, and for each $x$ with $\rank(x)\leq 2\beta+l$.
Denote by $\mathcal A$ the infinitesimal generator of $(P_t)_{t\geq 0}$.

Distinguish the following two cases.
\begin{enumerate}
\item $l<p-2\beta$. Since for all $x\in D_p(2\beta+l)$, $e_{2\beta+l+1}{(x)=} 0$, 
\begin{align*}
0&=\lim_{t\rightarrow 0}\frac{P_t e_{2\beta+l+1}(x)-e_{2\beta+l+1}(x)}{t}=\mathcal A e_{2\beta+l+1}(x)=\\
&= (p-(2\beta+l))(-\beta-l)e_{2\beta+l}(x)\neq 0,\quad \text{for all } x \text{ with }\rank(x)=2\beta+l,
\end{align*}
which is a contradiction. Here, for the last identity Proposition \ref{prop GM1} has been used.
\item  $l=p-2\beta$. Then $\rank(\omega)=p$ and the semigroup $(P_t)_{t\geq 0}$ acts on $C_0(\bar{\mathcal S}_p^+)$. The positivity of the Feller semigroup implies that its infinitesimal generator $\mathcal A$ satisfies the positive maximum principle. Applied to $e_p(x)=\det(x)$ this implies
that
\[
\mathcal A \det(x_0)\geq 0
\]
for any $x_0$ with $\rank(x_0)<p$.  Choose $x_0$ with $\rank(x_0)=p-1$, then
$e_{p-1}(x_0)>0$, and therefore by Proposition \ref{prop GM1} (setting $n=p$ and recalling {that} $e_p=\det$)
\[
\mathcal A \det(x_0)=(2\beta-p+1)e_{p-1}(x_0)<0
\]
because $\beta\in\{0,1\dots,\frac{p-2}{2}\}$, by assumption. This violates the positive maximum principle. 
\end{enumerate}

These two contradictions imply that indeed $\rank(\omega)\leq 2\beta$, whenever $\beta\in\{0,\dots,\frac{p-2}{2}\}$, and thus the proof of the NCGS conjecture is finished.
{
\begin{remark}
Let us mention another proof of the necessity in  the NCGS. 
As above, the  existence of a single distribution $\Gamma_p(\beta,\omega;I)$ implies the  existence of
 a Wishart semigroup $(P_t)_{t\geq 0}$ with state space $D_{p}(2\beta+l)$ and with drift $2\beta$. By Proposition A.2(ii),   the Wishart SDE \eqref{eq:Wishart:SDePLUS} has a global weak solution
 with $X_0=\omega$.  The proof is completed by using Theorem \ref{char sdes}.
	\end{remark}
}
\subsection{A Characterization of Wishart Semigroups}\label{char low rank}
The paper concludes with the following characterization of Wishart semigroups with state spaces {$D_p(k)$, the $p\times p$ symmetric
positive semi-definite matrices of rank $\leq k$.\footnote{Note that $D_p(k)$ are non-convex domains for $k<p$, but, {by Theorem \ref{th super}, }
Wishart semigroups on $D_p(k)$ cannot be extended to their convex hull $\overline{\mathcal S}_p^+$.} The statement has been conjectured by Damir Filipovi\'c \cite{bib:filcon} in 2009.
\begin{theorem}\label{nude semigroups}
Let $k\in\{1,\dots,p\}$ and let $\alpha \geq 0$.
The following are equivalent:
\begin{enumerate}
\item \label{state2} The Wishart semigroup with state-space $D=D_p(k)$ exists.
\item \label{state1} If $k\in\{1,\dots,p-1\}$, then $\alpha= k$, and if $k=p$, then $\alpha\geq p-1$.
\end{enumerate}
\end{theorem}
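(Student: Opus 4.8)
The plan is to split along the dichotomy already present in the statement. For $k=p$ one has $D_p(p)=\bar{\mathcal S}_p^+$, and the equivalence between (\ref{state2}) and $\alpha\ge p-1$ is precisely \cite[Theorem 2.4]{bib:CFMT}, recalled in the text; so no new work is needed there. I therefore concentrate on $k\in\{1,\dots,p-1\}$, where I must show that a Wishart semigroup on $D_p(k)$ exists if and only if $\alpha=k$.

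For (\ref{state1}) $\Rightarrow$ (\ref{state2}) I would construct the candidate semigroup from its prescribed transition function. Put $\beta=k/2$, so $2\beta=k$. For $x\in D_p(k)$ one has $\rank(x)\le k=2\beta$, hence by the (now proven) NCGS characterization the distribution $\Gamma_p(k/2,x;2tI)$ exists for every $t>0$. Writing $x=\sum_{i=1}^{k}m_im_i^\top$ and realizing this law as $\sum_{i=1}^{k}\xi_i\xi_i^\top$ with $\xi_i\sim\mathcal N_p(m_i,tI)$ independent (equivalently, as the explicit solution $\sum_{i=1}^k(y_i+B_i)(y_i+B_i)^\top$ of the Wishart SDE furnished by Theorem \ref{char sdes}), one sees it is a sum of $k$ rank-one matrices, hence supported in $D_p(k)$. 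Setting $p_t(x,d\xi):=\Gamma_p(k/2,x;2tI)$, I would verify Chapman--Kolmogorov directly on Laplace transforms: with $v=(u^{-1}+2tI)^{-1}$ the identity $u(u^{-1}+2tI)=I+2tu$ yields $\det(I+2tu)\det(I+2sv)=\det(I+2(s+t)u)$ and $(v^{-1}+2sI)^{-1}=(u^{-1}+2(s+t)I)^{-1}$, which is exactly the flow identity making $(p_t)_{t\ge0}$ a Markov transition function on $D_p(k)$. The associated operators are positive and conservative (hence $C_0$-contractions), map $C_0(D_p(k))$ into itself because $P_tf_u(x)=c(t,u)\,e^{-\tr(x(u^{-1}+2tI)^{-1})}$ with $(u^{-1}+2tI)^{-1}\succ0$ decays at infinity on the cone, and are strongly continuous since $\Gamma_p(k/2,x;2tI)\to\delta_x$ as $t\downarrow0$; together these confirm we have a Wishart semigroup on $D_p(k)$ with drift $\alpha=k$.

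The implication (\ref{state2}) $\Rightarrow$ (\ref{state1}) is the conceptual core, and here the elementary symmetric polynomials do the decisive work. Assume a Wishart semigroup on $D=D_p(k)$ with drift $\alpha$. The key observation is that $e_{k+1}$ vanishes identically on $D_p(k)$, since a matrix of rank $\le k$ has at most $k$ nonzero eigenvalues while each monomial of $e_{k+1}$ is a product of $k+1$ of them. Because the transition function is supported in $D_p(k)$ (Remark \ref{rem1x}), one has $P_te_{k+1}(x)=\int e_{k+1}(\xi)\,p_t(x,d\xi)=0$ for all $x\in D_p(k)$ and all $t>0$, and also $e_{k+1}(x)=0$; hence $\mathcal A e_{k+1}\equiv0$ on $D_p(k)$. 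On the other hand, Proposition \ref{prop GM1} gives $\mathcal A e_{k+1}(x)=(p-k)(\alpha-k)\,e_k(x)$. Since $k\le p-1$ the factor $p-k$ is nonzero, and choosing any $x\in D_p(k)$ with $\rank(x)=k$ makes $e_k(x)>0$; therefore $\alpha-k=0$, i.e.\ $\alpha=k$. This pins the drift to the rank bound exactly and closes the equivalence.

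I expect the main obstacle to lie not in this polynomial argument---which is short once Proposition \ref{prop GM1} and the support statement are available---but in the careful verification, for the existence direction, that the candidate family genuinely forms a strongly continuous $C_0$ contraction semigroup on the \emph{non-convex} cone $D_p(k)$: one must confirm that $P_t$ preserves $C_0(D_p(k))$ (extending from the exponentials $f_u$ by a Stone--Weierstrass density argument) and is strongly continuous at $t=0$. The affine/Feller framework of the earlier sections and the appendix, combined with the Laplace-transform computations above, should supply this, but it is the step demanding the most technical care.
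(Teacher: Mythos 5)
Your proposal is correct and takes essentially the same approach as the paper: the necessity direction is exactly the paper's argument (since $e_{k+1}$ vanishes identically on $D_p(k)$, Proposition \ref{prop GM1} gives $0=(\mathcal A e_{k+1})(x)=(p-k)(\alpha-k)e_k(x)$, and choosing $x$ of rank $k$ forces $\alpha=k$), and the case $k=p$ is delegated to \cite{bib:CFMT} just as in the paper. For the existence direction the paper also constructs the semigroup ``using squares,'' simply citing the construction in the proof of Theorem \ref{char sdes} and \cite[Examples III.1 and III.2]{bib:mayer} for the verifications (Chapman--Kolmogorov via the Laplace-transform flow identity, support in $D_p(k)$, and the Feller properties) that you spell out explicitly.
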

\begin{proof}
If $k=p$, that is $D=\bar{\mathcal S}_p^+$, then $\alpha\geq p-1$ due to
\cite{bib:CFMT}, which also includes a proof of existence. Therefore, only the cases $k<p$ require a proof:

Proof of \ref{state1} $\Rightarrow$ \ref{state2}: The existence is shown by construction, using squares. See, for instance, the proof of Theorem \ref{char sdes}, or \cite[Examples {III.1 and III.2}]{bib:mayer}.

Proof of \ref{state2} $\Rightarrow$ \ref{state1}: Assume the existence of a Wishart semigroup on $D_p(k)$
\footnote{Using the NCGS conjecture, the following, weaker, conclusion can be made. Assume the existence of a Wishart semigroup on $D_p(k)$. Then $\Gamma_p(\alpha,x_0,I)$ exists with $\rank(x_0)=k$. By the NCGS Conjecture, $\alpha/2 \in W_0$ and, if $\alpha<p-1$, then $\rank(x_0)\leq \alpha$. This implies $\alpha \geq k$.}. Since $e_{k+1}$ vanishes on $D_p(k)$, one obtains by using Proposition \ref{prop GM1} that
\[
0=(\mathcal A e_{k+1})(x)=(p-k)(\alpha-k) e_k(x).
\]
Since $k<p$, and $e_k(x)>0$ for $\rank(x)=k$ matrices, $\alpha$ must be equal to $k$. 
\end{proof}
\begin{appendix}

\section{Wishart Semimartingales}\label{appendix a}

\begin{proposition}\label{prop wish semi}
Let $(\Omega,\mathcal F,(\mathcal F_t)_{t\geq 0}\mathbb P)$ be a {standard} filtered probability space. Let $(X_t)_{t\geq 0}$ be a continuous, $\bar{\mathcal S}_p^+$ valued semimartingale of the form
\begin{equation}\label{eq: wish semi}
dX_t=dM_t+\alpha { I dt} , 
\end{equation}
where $\alpha\geq 0$, and the continuous  martingale $M_t$ has quadratic variation
\begin{equation}\label{eq: qvar}
d{\langle}M_{t,ij}, M_{t,kl}{\rangle}
=\left((X_t)_{ik}\delta_{jl}+(X_t)_{il}\delta_{jk}+(X_t)_{jk}\delta_{il}+(X_t)_{jl}\delta_{ik}\right)dt.
\end{equation}
Then there exists an extension $(\widetilde{\Omega},\widetilde{\mathcal F},(\widetilde{\mathcal F_t})_{t\geq 0}, \widetilde{\mathbb P})$ of $(\Omega,\mathcal F,(\mathcal F_t)_{t\geq 0}, \mathbb P)$ which supports
a $d\times d$ standard Brownian motion $W$ such that
\begin{equation}\label{eq: matrix sde}
dX_t=\sqrt{X_t}dW_t+dW_t^\top \sqrt{X_t}+\alpha I dt.
\end{equation}
\end{proposition}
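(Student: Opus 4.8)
The plan is to realize $W$ via the representation theorem for continuous local martingales, which, after possibly enlarging the probability space, produces a Brownian motion reproducing a prescribed absolutely continuous quadratic variation. The enlargement is essential precisely because $\sqrt{X_t}$ may be singular on the boundary of the cone, so one cannot simply invert the diffusion coefficient.

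First I would set up the linear-algebraic factorization. Write $s_t:=\sqrt{X_t}$; since the positive square root is a continuous function of $X_t$ and $X$ is a continuous, adapted, $\bar{\mathcal S}_p^+$-valued process, $s_t$ is continuous and adapted, hence progressively measurable. Identifying $\mathcal S_p$-valued and $\mathbb{R}^{p\times p}$-valued processes with $\mathbb{R}^{p^2}$-valued ones, define the linear map $L_{s_t}(w)=s_t w + w^\top s_t$ and let $\rho_t$ be its $p^2\times p^2$ matrix; it is progressively measurable because $s_t$ is. The key identity is that $a_t=\rho_t\rho_t^\top$, where $a_t$ is the coefficient matrix of the prescribed quadratic variation \eqref{eq: qvar}. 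This is a direct computation: the $(ij),(ab)$ entry of $\rho_t$ is $s_{ia}\delta_{jb}+\delta_{ib}s_{aj}$, so $(\rho_t\rho_t^\top)_{(ij),(kl)}$ expands into four terms, each of the form $(s^2)_{\bullet\bullet}\,\delta_{\bullet\bullet}$, and using $s_t^2=X_t$ together with the symmetry of $s_t$ these collapse to exactly $(X_t)_{ik}\delta_{jl}+(X_t)_{il}\delta_{jk}+(X_t)_{jk}\delta_{il}+(X_t)_{jl}\delta_{ik}$, matching \eqref{eq: qvar}.

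Second, I would invoke the representation theorem for continuous local martingales (Ikeda and Watanabe, Theorem II.7.1$'$, or Karatzas and Shreve): given the continuous martingale $M$, viewed as $\mathbb{R}^{p^2}$-valued, with $\langle M\rangle_t=\int_0^t a_s\,ds$ and the progressively measurable factorization $a_s=\rho_s\rho_s^\top$ from the previous step, there exists an extension $(\widetilde\Omega,\widetilde{\mathcal F},(\widetilde{\mathcal F}_t)_{t\geq 0},\widetilde{\mathbb P})$ of the original space carrying a $p\times p$ standard Brownian motion $W$ (i.e.\ $p^2$ independent scalar Brownian motions) such that $M_t=\int_0^t \rho_s\,dW_s$. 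Reading the identification back into matrix form, $\int_0^t\rho_s\,dW_s=\int_0^t\bigl(\sqrt{X_s}\,dW_s+dW_s^\top\sqrt{X_s}\bigr)$, and hence $dX_t=dM_t+\alpha I\,dt=\sqrt{X_t}\,dW_t+dW_t^\top\sqrt{X_t}+\alpha I\,dt$, which is \eqref{eq: matrix sde}.

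The main obstacle is exactly the degeneracy of $\sqrt{X_t}$ on the boundary of $\bar{\mathcal S}_p^+$, which is the generic situation in the low-rank regime $\alpha\in B$. When $X_t$ loses rank, $\rho_t$ fails to be invertible, so the naive attempt to define $dW_t=\rho_t^{-1}dM_t$ breaks down; the representation theorem resolves this by supplying, on the enlarged space, the independent Brownian directions needed to complete $W$ wherever $\sqrt{X_t}$ is degenerate. The remaining points to verify are routine: the adaptedness and progressive measurability of $s_t$ and $\rho_t$, and that the vectorization identifications are chosen isometrically so that $\rho_t\rho_t^\top$ reproduces \eqref{eq: qvar} componentwise.
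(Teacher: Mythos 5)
Your proposal is correct and follows essentially the same route as the paper: the paper also vectorizes the matrix SDE, factors the prescribed quadratic covariation through the map $w\mapsto\sqrt{X_t}\,w+w^\top\sqrt{X_t}$, and invokes the representation theorem for continuous local martingales on an extension of the probability space (citing Rogers--Williams, Theorem V.20.1, with details as in the proof of Theorem 2.6 of \cite{bib:CFMT}). Your explicit verification of the factorization $a_t=\rho_t\rho_t^\top$ is exactly the computation the paper delegates to those references.
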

\begin{proof}
This is an application of \cite[Theorem V.20.1]{rogerswilliams2}, where one interprets the SDE
\eqref{eq: matrix sde} in vector form, and thus $W$ as a vector of $p^2$ independent, standard Brownian motions. The details of the proof are the same as those
found in  \cite[p. 53, Proof of Theorem 2.6]{bib:CFMT}.
\end{proof}

\begin{proposition}\label{prop a22}
Let $D\subset\mathcal S_p^+$ such that $D_p(1)\subset D$, and let $(P_t)_{t\geq 0}$ be a Wishart semigroup on $D$ with parameter $\alpha$. The following hold:
\begin{enumerate}
\item \label{crux 1} 
For $x\in D$ let $(X,\mathbb P^x)$ be the canonical representation of the Markov semigroup with the initial law ${\delta_x}$ (cf. Remark \ref{rem1x}(i)). There exists a version $\widetilde X$ of $X$ that is a continuous semimartingale of the form \eqref{eq: wish semi} with quadratic variation \eqref{eq: qvar}. 
\item \label{crux 2} For any $x\in D$, the Wishart SDE \eqref{eq:Wishart:SDePLUS} has a global weak solution
with $X_0=x$.
\end{enumerate}
\end{proposition}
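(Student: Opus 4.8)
The plan is to prove \ref{crux 1} first and to obtain \ref{crux 2} as an immediate consequence. Throughout, the hypothesis $D_p(1)\subset D$ is used twice: it guarantees that the linear span of $D$ is all of $\mathcal S_p$ (rank-one symmetric matrices span $\mathcal S_p$), so that $D$ has non-empty interior in its affine hull, and it is exactly the assumption under which Proposition \ref{prop genx} identifies the generator of $(P_t)_{t\geq 0}$ on $S^*_p(D)$ with the second-order operator $\mathcal A^\sharp$ of \eqref{A sharp}. Since $(P_t)_{t\geq 0}$ is Feller, the canonical process $X$ admits a c\`adl\`ag version, and the strategy is to pin down its semimartingale characteristics from $\mathcal A^\sharp$ by a moment-based martingale-problem argument; this has the advantage of not requiring any regularity of the possibly non-convex state space $D$. (Alternatively, one could invoke the affine-semimartingale theory underlying \cite{bib:CFMT}.)

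For the drift I would first extract the first moment from the affine Laplace transform \eqref{ap}. Expanding $e^{-\phi(s,u)-\tr(\psi(s,u)y)}$ to first order in $u$, using $\phi(s,u)=\tfrac{\alpha}{2}\log\det(I+2su)=\alpha s\,\tr(u)+o(u)$ and $\psi(s,u)=(u^{-1}+2sI)^{-1}=u+o(u)$, gives $\mathbb{E}^y[X_s]=y+\alpha s\,I$ (consistent with the computation $\mathbb{E}\,e_1(t)=e_1(0)+p\alpha t$ in the proof of Proposition \ref{prop:Poly}). Combining this identity with the Markov property then shows that
\[
M_t:=\widetilde X_t-x-\alpha t\,I
\]
is a continuous martingale with $M_0=0$, where $\widetilde X$ is a continuous version of $X$ (constructed below); integrability is guaranteed because the Wishart transition function has finite moments of all orders. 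This establishes the form \eqref{eq: wish semi}.

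The quadratic variation is the main obstacle, since the second-order part of $\mathcal A^\sharp$ must be converted into the bracket \eqref{eq: qvar}. I would compute the carr\'e du champ of $\mathcal A^\sharp$ on the coordinate functions $x\mapsto x_{ij}$: as $\mathcal A^\sharp$ is a diffusion operator, $\mathcal A^\sharp(x_{ij}x_{kl})-x_{ij}\mathcal A^\sharp x_{kl}-x_{kl}\mathcal A^\sharp x_{ij}$ equals precisely $A(x)_{ijkl}=x_{ik}\delta_{jl}+x_{il}\delta_{jk}+x_{jk}\delta_{il}+x_{jl}\delta_{ik}$. Translating this into the martingale problem yields that $M_{ij}(t)M_{kl}(t)-\int_0^t A(\widetilde X_s)_{ijkl}\,ds$ is a martingale, i.e. $d\langle M_{ij},M_{kl}\rangle_t=A(\widetilde X_t)_{ijkl}\,dt$, which is exactly \eqref{eq: qvar}. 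The delicate point is that $x_{ij}$ and $x_{ij}x_{kl}$ are not in $S^*_p(D)$, so the generator identity of Proposition \ref{prop genx} must be extended to these polynomials; I would justify this by a truncation/localization argument together with the polynomial property of $(P_t)_{t\geq0}$ and the moment bounds already recorded in Propositions \ref{prop:Poly} and \ref{prop GM1}. The same moment control delivers path continuity: conditionally on $\mathcal F_t$ the increment $\widetilde X_{t+h}-\widetilde X_t$ is governed by $\Gamma_p(\alpha/2,\widetilde X_t;2hI)$, whose fourth central moment is $O(h^2)$, so the Kolmogorov--Chentsov criterion upgrades the c\`adl\`ag version to a continuous one $\widetilde X$; equivalently, $\mathcal A^\sharp$ carries no integral (jump) term, which forces the jump characteristic to vanish.

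Finally, \ref{crux 2} is immediate. The version $\widetilde X$ produced in \ref{crux 1} is a continuous, $\bar{\mathcal S}_p^+$-valued semimartingale of the form \eqref{eq: wish semi} with quadratic variation \eqref{eq: qvar} and $\widetilde X_0=x$. Proposition \ref{prop wish semi} then supplies an extension of the underlying probability space carrying a Brownian motion $W$ for which $\widetilde X$ solves the Wishart SDE \eqref{eq:Wishart:SDePLUS}; this is the desired global weak solution started at $x$.
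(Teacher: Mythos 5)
Your strategy is sound and genuinely different from the paper's. The paper does not run a hands-on martingale-problem argument at all: it invokes the general theory of affine processes on arbitrary state spaces from \cite{Cpaths}. There, the hypothesis $D_p(1)\subset D$ is used to verify that $D$ contains $p(p+1)/2+1$ affinely independent points (Assumption 1 of \cite{Cpaths}); Theorem 2 of \cite{Cpaths} gives the c\`adl\`ag version, Theorem 6 (together with conservativeness, Remark \ref{rem1x}) gives the semimartingale property with differential characteristics $(b,c,K)$, and Theorem 7 says these are uniquely determined by the derivatives $F(u)=\alpha\tr(u)$ and $R(u)=-2u^2$ of $(\phi,\psi)$ at $t=0$, already computed in \eqref{eq F}--\eqref{eq R} via Proposition \ref{prop genx}. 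Comparing expressions forces the jump kernel $K$ to vanish (hence continuity) and identifies the drift $\alpha I$ and the bracket \eqref{eq: qvar} in one stroke. Your plan replaces this machinery by elementary steps: first-moment expansion plus the Markov property for the drift, carr\'e du champ plus truncation/localization for the bracket, and Kolmogorov--Chentsov for continuity. This buys self-containedness (no appeal to \cite{Cpaths}), at the price of having to extend the generator identity of Proposition \ref{prop genx} beyond $S^*_p(D)$, which the paper's route avoids entirely. Part \ref{crux 2} is handled identically in both proofs, via Proposition \ref{prop wish semi}.

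There is, however, one flaw you must repair: as written, your argument is circular. You invoke the polynomial property and moment bounds of Propositions \ref{prop:Poly} and \ref{prop GM1}, but Proposition \ref{prop GM1} is proved in the paper by first applying Proposition \ref{prop a22} --- the very statement you are proving --- and Proposition \ref{prop:Poly} is a statement about solutions of the SDE \eqref{eq:Wishart:SDePLUS}, which at this stage you do not yet know the canonical process to be. Neither may be cited here. Fortunately the fix is local: every estimate you need (all moments of $X_t$ under $\mathbb P^x$ finite and polynomially bounded in $x$, locally uniformly in $t$; fourth central moments of increments of order $O(h^2)$) follows directly from the fact that the transition function is $\Gamma_p(\alpha/2,x;2tI)$, by differentiating its Laplace transform (Proposition \ref{FLT maximal}), with no reference to those propositions. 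With that substitution, your truncation/localization step (cutoffs $\chi_n x_{ij}$ and $\chi_n x_{ij}x_{kl}$ lie in $S^*_p(D)$, so Proposition \ref{prop genx} applies; stop at exit times of balls, which tend to infinity by non-explosion; use the moment bounds for uniform integrability) is standard and closes the argument.
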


\begin{proof}
Proof of \ref{crux 1}: The canonical representation $(X, (\mathbb P_x)_{x\in D}$ constitutes a time homogeneous Markov process in the sense of \cite[Definition 1]{Cpaths} and an affine processes
in the sense of \cite[Definition 2]{Cpaths}. Since $D_p(1)\subset D$, $D$ contains {$p\times (p+1)/2+1$} affinely independent elements, and thus $D$ satisfies \cite[Assumption 1]{Cpaths}.\\
Let $\mathcal F_t^0=\sigma(X_s,s\leq t)$ be the filtration generated by the canonical process $X_t(\omega):=\omega(t)$, and let $\mathcal F^0:=\vee _{t\geq 0}\mathcal F_t$. Then by \cite[Theorem 2]{Cpaths},
there exists a version $\widetilde X$ of $X$ which is c\`adl\`ag. Since $(P_t)_{t\geq 0}$ is conservative,
\cite[Theorem  6]{Cpaths} implies that $\widetilde X$ is a semimartingale with characteristics $(B,C,\nu)$,
where
\begin{align*}
B_{t,i}&=\int_0^t b_i(\widetilde X_{s_-})ds,\\
C_{t,ij}&=\int_0^tc_{ij}(\widetilde X_{s-})ds,\\
\nu(\omega;dt,d\xi)&=K(\widetilde X_t,d\xi) dt.
\end{align*}
Here $b: D\rightarrow \mathcal S_p$ and $c: D\rightarrow \text{Sym}_+(\mathcal S_p)$ are measurable functions, and $K(x,d\xi)$ is a positive kernel ($\text{Sym}_+(V)$ denotes positive semidefinite matrices
on a vector space $V$).
From the computations in the proof of Proposition \ref{prop genx} it follows that $(X,\mathbb P_x)$ is regular in the sense of \cite[Definition 7]{Cpaths}, that is, the coefficients $\phi,\psi$ are differentiable
at $t=0$, with derivatives $F(u), R(u)$ given by \eqref{eq F} and \eqref{eq R}. On the other hand, by \cite[Theorem 7]{Cpaths}, the functions $F(u), R(u)$ uniquely determine the differential characteristics $b_i(x), c_{ij}(x)$ and $K(x,d\xi)$. A comparison of \eqref{eq F}--\eqref{eq R} with the expressions of $F$ and $R$ in \cite[Theorem  7]{Cpaths} finally reveals that $\nu=0$, i.e., the process $\widetilde X$ is continuous $\mathbb P_x$-almost surely, because by the semimartingale decomposition
\[
X_t=X_0+\int_0^t dM_s+\int _0^tb(X_s)ds,
\]
where $M$ is the continuous martingale part of $X$.

Proof of \ref{crux 2}:   Follows from \ref{crux 1} by applying Proposition \ref{prop wish semi}.

\end{proof}
\section{Fourier-Laplace Transform of  Wishart distributions}
This section shows that the Laplace transform \eqref{FLT Mayerhofer Wishart} can be extended to its maximal domain, which is dictated by the blow up of the right side.

The right side of  \eqref{FLT Mayerhofer Wishart} is a real analytic function, which is finite on the domain
\[
D(\mu):=-\Sigma^{-1}+\mathcal S_p^+
\]
but blows up as the argument $u$ approaches the boundary $\partial D(\mu)$, since then the determinant vanishes.

Furthermore, the right side of \eqref{FLT Mayerhofer Wishart} can be extended to a complex analytic function on the complex strip $D(\mu)+i\mathcal S_p$ (by just replacing $u$ by $u+iv$, where $v\in\mathcal S_p$) and it agrees, by definition, with the left side of \eqref{FLT Mayerhofer Wishart}, on a set of uniqueness, namely the open domain $\mathcal S_p^+$. Hence, by \cite[(9.4.4)]{dieudonne}, equality holds in \eqref{FLT Mayerhofer Wishart} for $u\in \mathcal S_p^++i\mathcal S_p$.

The following extends the validity of \eqref{FLT Mayerhofer Wishart} to
its maximal domain $D(\mu)+i\mathcal S_p$:

\begin{proposition}\label{FLT maximal}
Let $\mu=\Gamma_p(\beta,\omega;\Sigma)$. Then its Fourier-Laplace transform
can be extended to the complex strip $D(\mu)+i\mathcal S_p$, and
\eqref{FLT Mayerhofer Wishart} holds for any $u\in D(\mu)+i\mathcal S_p$.
\end{proposition}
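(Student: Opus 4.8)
The plan is to reduce the statement to a single integrability fact — that the real Laplace transform is finite on all of $D(\mu)$ — and then to invoke analytic continuation on the tube, the latter being essentially already prepared by the two paragraphs preceding the proposition. Write $L(a)=\int_{\bar{\mathcal S}_p^+}e^{-\tr(a\xi)}\mu(d\xi)$ for $a\in\mathcal S_p$, and let $\Theta=\{a\in\mathcal S_p:L(a)<\infty\}$ be its (convex) domain of convergence. Since $\tr(a\xi)\ge0$ for $a,\xi\in\bar{\mathcal S}_p^+$, one has $\bar{\mathcal S}_p^+\subseteq\Theta$ and, by \eqref{FLT Mayerhofer Wishart}, $L=\Phi$ there, where $\Phi$ denotes the right-hand side of \eqref{FLT Mayerhofer Wishart}. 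The goal of the first and main step is to prove the inclusion $D(\mu)\subseteq\Theta$; because $D(\mu)$ is open, this yields at once $D(\mu)\subseteq\operatorname{int}(\Theta)$.

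For this key step I would control $L(a)$ by a single scalar exponential moment. Given $a\in D(\mu)$, i.e. $a+\Sigma^{-1}\in\mathcal S_p^+$, continuity provides $\theta\in[0,1)$ with $a+\theta\Sigma^{-1}\in\bar{\mathcal S}_p^+$; then for every $\xi\in\bar{\mathcal S}_p^+$ one has $-\tr(a\xi)=-\tr((a+\theta\Sigma^{-1})\xi)+\theta\tr(\Sigma^{-1}\xi)\le\theta\tr(\Sigma^{-1}\xi)$, whence $L(a)\le\mathbb{E}[e^{\theta S}]$ with $S:=\tr(\Sigma^{-1}X)$ and $X\sim\mu$. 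The one-sided Laplace transform of the scalar variable $S$ is genuinely known from \eqref{FLT Mayerhofer Wishart}: for $s\ge0$, plugging $u=s\Sigma^{-1}\in\mathcal S_p^+$ gives
\[
\ell(s):=\mathbb{E}[e^{-sS}]=\Phi(s\Sigma^{-1})=(1+s)^{-p\beta}\,e^{-\frac{s}{1+s}\tr(\Sigma^{-1}\omega)}.
\]
This $\ell$ extends to a function that is real-analytic on $(-1,\infty)$ whose only complex singularity sits at $s=-1$, so its Taylor series at the origin has radius of convergence exactly $1$. Since $S\ge0$, Tonelli's theorem gives $\mathbb{E}[e^{\theta S}]=\sum_{n\ge0}\frac{\theta^n}{n!}\mathbb{E}[S^n]=\ell(-\theta)<\infty$ for every $\theta\in[0,1)$, the moments $\mathbb{E}[S^n]=(-1)^n\ell^{(n)}(0)$ being finite by differentiating the Laplace transform at $s=0^+$. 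Hence $L(a)<\infty$, and $D(\mu)\subseteq\operatorname{int}(\Theta)$.

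It then remains to assemble the complex-analytic conclusion. By the standard theory of Laplace transforms of positive measures, $z\mapsto\hat\mu(z)=\int e^{-\tr(z\xi)}\mu(d\xi)$ converges absolutely and is holomorphic on the tube $\operatorname{int}(\Theta)+i\mathcal S_p$, hence in particular on $D(\mu)+i\mathcal S_p$. The function $\Phi$ is holomorphic on the same tube, as recorded before the proposition, and the two agree on the nonempty open set $\mathcal S_p^++i\mathcal S_p$. Since the base $D(\mu)$ is convex, the tube $D(\mu)+i\mathcal S_p$ is connected, so the identity theorem forces $\hat\mu=\Phi$ throughout $D(\mu)+i\mathcal S_p$, which is the assertion.

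I expect the only genuine difficulty to be the inclusion $D(\mu)\subseteq\Theta$, that is, integrability up to the boundary $\partial D(\mu)$. A purely soft argument (convexity of $\Theta$ together with the blow-up of $\Phi$ on $\partial D(\mu)$) yields only $\operatorname{int}(\Theta)\subseteq D(\mu)$; the reverse inclusion is a steepness statement that cannot follow from analytic continuation alone and requires true tail control of $\mu$. The device that supplies it cheaply is the passage to the scalar functional $S=\tr(\Sigma^{-1}X)$, whose one-sided Laplace transform $\ell$ is explicitly singular precisely at $s=-1$; this pins the threshold for finite exponential moments at $\theta=1$ and thereby matches $\Theta$ to $D(\mu)$. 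I would also take care to note that this argument is self-contained and does not invoke Lemma \ref{mayx}, whose part \ref{x2} in turn relies on the present proposition.
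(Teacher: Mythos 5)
Your proof is correct, and its overall architecture is the same as the paper's: reduce to the scalar variable $S=\tr(\Sigma^{-1}X)$ (the paper's pushforward measure $\mu^*$ in \eqref{eq: one dim}), identify its one-sided Laplace transform with the explicit analytic expression, deduce finiteness of the exponential moments $\ex[e^{\theta S}]$ for all $\theta\in[0,1)$, dominate $L(a)$ for $a\in D(\mu)$ by such a moment, and conclude by holomorphy on the tube together with uniqueness of analytic continuation (the paper invokes \cite[(9.4.4)]{dieudonne} at this point). The one genuine difference is how the exponential moments are obtained. The paper proves a general one-dimensional extension result, Lemma \ref{lem adpta}, refining \cite[Lemma A.4]{ADPTA}: a Taylor/monotone-convergence argument first pushes equality past the origin onto a small interval $(-\varepsilon,\varepsilon)$, and then the external lemma is needed to reach all of $(-\infty,s_1)$, because a function assumed analytic only on a \emph{real} interval may have complex singularities that limit the radius of convergence at $0$. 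You bypass this second step by observing that in the case at hand $\ell(s)=(1+s)^{-p\beta}e^{-\frac{s}{1+s}\tr(\Sigma^{-1}\omega)}$ is holomorphic on the full complex disc $\{|s|<1\}$, so its Taylor series at the origin converges at every $-\theta$ with $\theta\in[0,1)$; combined with the identification $\ex[S^n]=(-1)^n\ell^{(n)}(0)$ (one-sided differentiation under the integral plus monotone convergence --- the same computation that appears inside the paper's proof of Lemma \ref{lem adpta}) and Tonelli, this yields $\ex[e^{\theta S}]=\ell(-\theta)<\infty$ in a single stroke. What your route buys is self-containedness: no appeal to \cite[Lemma A.4]{ADPTA} or to a separate extension lemma; what it gives up is generality, since it is tailored to the explicit singularity structure of $\ell$, whereas the paper's Lemma \ref{lem adpta} is a reusable statement about arbitrary measures on $\R_+$. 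Your closing remark is also well taken and consistent with the paper's logical order: the proof must not (and in neither version does) use Lemma \ref{mayx} \ref{x2}, which itself relies on the present proposition.
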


For the proof, the following fundamental technical statement concerning extension of the Laplace transform of a measure on the non-negative real line is used. It is a refinement of \cite[Lemma A.4]{ADPTA}:
\begin{lemma}\label{lem adpta}
Let $\mu$ be a probability measure on $\mathbb R_+$, and $h$ an analytic function on $(-\infty,s_1)$, where $s_1>s_0\geq 0$ such that
\begin{equation}\label{eq A1}
\int _{\mathbb R_+}e^{sx} \mu(dx)=h(s)
\end{equation}
for $s\in (-\infty, s_0)$. Then \eqref{eq A1} also holds for $s\in (-\infty, s_1)$. 
\end{lemma}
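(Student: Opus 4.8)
The statement to prove is Lemma~\ref{lem adpta}: given a probability measure $\mu$ on $\mathbb R_+$ whose Laplace transform $\int_{\mathbb R_+}e^{sx}\mu(dx)$ agrees with an analytic function $h$ on $(-\infty,s_0)$, and given that $h$ extends analytically to the larger interval $(-\infty,s_1)$, one concludes that the Laplace transform itself is finite and equals $h$ on all of $(-\infty,s_1)$.

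My plan is to exploit the fundamental dichotomy for Laplace transforms of positive measures: the abscissa of convergence. Define $s_* := \sup\{s\in\mathbb R : \int_{\mathbb R_+}e^{sx}\mu(dx)<\infty\}$. The key classical fact is that the function $s\mapsto L(s):=\int_{\mathbb R_+}e^{sx}\mu(dx)$ is finite, real-analytic, and equals $h$ on $(-\infty,s_*)$, while $L(s)=+\infty$ for $s>s_*$; moreover, by the monotone convergence theorem the left limit satisfies $\lim_{s\uparrow s_*}L(s)=L(s_*)$ (possibly $+\infty$). The whole proof reduces to showing $s_*\ge s_1$. Suppose, for contradiction, that $s_*<s_1$. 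By assumption $s_*\ge s_0$, so $s_*$ lies in the interior of $(-\infty,s_1)$ where $h$ is analytic, hence finite and continuous at $s_*$.

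First I would establish that $L(s)=h(s)$ for all $s<s_*$: this follows because both are analytic on $(-\infty,s_*)$ (Laplace transforms of positive measures are real-analytic in the interior of their domain of convergence, by differentiation under the integral sign justified by dominated convergence) and they agree on the subinterval $(-\infty,s_0)$ by hypothesis, so by the identity theorem for real-analytic functions they agree on the connected overlap. Next, I would take the limit $s\uparrow s_*$. Since $h$ is continuous at $s_*$, the left-hand side $L(s)=h(s)$ converges to the finite value $h(s_*)$. But monotone convergence forces $L(s)\uparrow L(s_*)=\int_{\mathbb R_+}e^{s_* x}\mu(dx)$, which is therefore finite and equal to $h(s_*)$. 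The heart of the argument is then to push strictly past $s_*$: I would use a power-series (Taylor) expansion of $e^{sx}$ around $s_*$, namely $e^{sx}=\sum_{k\ge 0}\frac{(s-s_*)^k}{k!}x^k e^{s_* x}$, and interchange sum and integral via Tonelli (all terms nonnegative for $s>s_*$) to get $L(s)=\sum_{k\ge 0}\frac{(s-s_*)^k}{k!}\int x^k e^{s_* x}\mu(dx)$. The moments $\int x^k e^{s_* x}\mu(dx)$ are exactly $h^{(k)}(s_*)$, which are finite and controlled because $h$ is analytic in a neighborhood of $s_*$ with positive radius of convergence; this analyticity gives a bound on the $h^{(k)}(s_*)$ guaranteeing the series converges for $s$ in a genuine right-neighborhood of $s_*$. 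Hence $L(s)<\infty$ for some $s>s_*$, contradicting the definition of $s_*$ as the abscissa of convergence.

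Therefore $s_*\ge s_1$, and combining with the identity-theorem step gives $L(s)=h(s)$ for all $s\in(-\infty,s_1)$, which is precisely the claim. The main obstacle, and the step requiring the most care, is the final contradiction: justifying that analyticity of $h$ \emph{beyond} $s_*$ actually controls the weighted moments $\int x^k e^{s_* x}\mu(dx)$ and forces the Laplace series to converge past $s_*$. One must verify that the formal identification of $\int x^k e^{s_*x}\mu(dx)$ with $h^{(k)}(s_*)$ is legitimate (this follows from repeated differentiation under the integral sign, valid strictly inside the convergence region) and then transfer the radius-of-convergence estimate for $h$ into a summability estimate for the nonnegative-term series defining $L(s)$ for $s>s_*$. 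This is the precise point where the refinement over \cite[Lemma A.4]{ADPTA} lives, and where the hypothesis that $h$ is analytic on the \emph{open} interval extending past $s_0$ is genuinely used.
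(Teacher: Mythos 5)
Your proof is correct, but it takes a genuinely different route from the paper's. The paper treats Lemma~\ref{lem adpta} as a small refinement of the cited result \cite[Lemma A.4]{ADPTA}: for $s_0>0$ it simply invokes that lemma, and the only new work is the case $s_0=0$, where it Taylor-expands $h$ at the origin, identifies the coefficients $c_k$ with the moments $\int x^k\mu(dx)$ via left derivatives of $f(s)=\int e^{sx}\mu(dx)$ on $(-\infty,0]$, uses monotone convergence to conclude that \eqref{eq A1} holds on $(0,\varepsilon)$, and then hands the extension to $(-\infty,s_1)$ back to \cite[Lemma A.4]{ADPTA} with $s_0=\varepsilon>0$. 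You instead run a self-contained contradiction argument at the abscissa of convergence $s_*$: identity theorem to get $L=h$ on $(-\infty,s_*)$, monotone convergence to get finiteness and the identity at $s_*$ itself, then the Taylor-plus-Tonelli step to push finiteness strictly past $s_*$, contradicting its definition. This is essentially Landau's theorem that the abscissa of convergence of the Laplace transform of a nonnegative measure is a singularity of any analytic continuation. The mechanism at the expansion point (derivatives of $h$ equal weighted moments, nonnegativity allows sum--integral interchange) is the same in both proofs; the difference is where it is applied ($s_*$ versus $0$) and that your version never needs the external lemma, in effect reproving it, while the paper's version is shorter given the citation but is not self-contained.

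One point to make explicit in your write-up: the identity $\int x^k e^{s_*x}\mu(dx)=h^{(k)}(s_*)$ does not follow from differentiation under the integral sign alone, since that is only valid for $s<s_*$. You need the same limiting argument you used for $L(s_*)$: for $s<s_*$ one has $\int x^k e^{sx}\mu(dx)=h^{(k)}(s)$, and as $s\uparrow s_*$ the left side increases to $\int x^k e^{s_*x}\mu(dx)$ by monotone convergence while the right side converges to $h^{(k)}(s_*)$ by continuity of $h^{(k)}$ near $s_*$. This is a one-line fix, not a gap in the approach, and it is the exact analogue of the paper's left-derivative computation at $0$.
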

\begin{proof}
If $s_0>0$, the statement follows from \cite[Lemma A.4]{ADPTA}. Let therefore $s_0=0$.

Denote, {for $s\le 0$}, $f(s)=\int_{\mathbb R_+} e^{sx}\mu(dx)$.

 Since $h(s)$ is real analytic at $0$, there exists $0<\varepsilon<s_1$ such that for any $s\in (-\varepsilon,\varepsilon)$ 
\[
h(s)=\sum_{k\geq 0} \frac{c_k}{k!} s^k.
\]
Furthermore, by dominated convergence, one obtains iteratively for the left derivatives
\[
\int_{\mathbb R_+} x^k e^{sx} \mu(dx)= \lim_{t\uparrow 0} \int _{\mathbb R_+}x^{k-1}e^{sx} \frac{e^{-tx}-1}{-t}\mu(dx)=f^{(k)}(s)=h^{(k)}(s), \quad s\leq 0,
\]
hence
\[
c_k=\int_{\mathbb R_+}x^k \mu(dx).
\]
Hence, by monotone convergence, for any $s\in (0,\varepsilon)$
\[
h(s)=\sum_{k\geq 0} \int_{\mathbb R_+}\frac{s^k {x^k}}{k!} \mu(dx)=\int_{\mathbb R_+}\sum_{k\geq 0}\frac{s^k {x^k}}{k!} \mu(dx)=\int_{\mathbb R_+} e^{sx}\mu(dx).
\]
Thus $h(s)$ {verifies \eqref{eq A1}}  on all of $(-\infty, \varepsilon)$. Now the assumptions of
\cite[Lemma A.4]{ADPTA} are verified (setting $s_0=\varepsilon$), that shows the extension to the maximal domain $(-\infty, s_1)$.
\end{proof}

{\it Proof of Proposition \ref{FLT maximal}.}
For $u=\Sigma^{-1}$, define $\mu^*$ {as} the pushforward of $\mu=\Gamma_p(\beta,\omega;\Sigma)$ under
$\xi\mapsto \tr(u \xi)=\tr(\Sigma^{-1}\xi)$. Then $\mu^*$ is a probability measure on $\mathbb R_+$ with
 Laplace transform
\begin{align}\label{eq: one dim}
f(t):&=\int e^{t x}\mu^*(dx)=\int e^{-\tr{((-tu)\xi)}}\mu(d\xi)\\\nonumber&=(\det\Sigma)^{-\beta} \det(\Sigma^{-1}(1-t))^{-\beta}e^{{t(1-t)^{-1}\tr
	(\Sigma^{-1}\omega)}},\quad\quad\quad t\leq 0,
\end{align}
and the right side is real analytic for $t<1$. Hence, by Lemma \ref{lem adpta} the left side is also
finite for $t<1$ and equality holds in \eqref{eq: one dim}.

Therefore, it is shown that the formula \eqref{FLT Mayerhofer Wishart} can be extended to $u=-t\Sigma^{-1}$, for any
$t<1$. Since $u>-\Sigma^{-1}$ implies $u>-t\Sigma^{-1}$ for some $t<1$, also for any $u>-{\Sigma^{-1}}$
\[
\int e^{-\tr(u\xi)}\mu(d\xi)\leq \int e^{t \tr(\Sigma^{-1}\xi)}\mu(d\xi)<\infty
\]
and therefore the left side of \eqref{FLT Mayerhofer Wishart} exists for any $u>-\Sigma^{-1}$, and thus also the Fourier-Laplace transform exists for any $u+iv$, where $u>-\Sigma^{-1}$ and $v\in\mathcal S_p$. Since the Fourier-Laplace transform is complex analytic on the strip ${-{\Sigma}^{-1}}+ \mathcal S_p^++i\mathcal S_p$, and agrees with the right side
of \eqref{FLT Mayerhofer Wishart} on the domain $\mathcal S_p^+$ (which is a set of uniqueness),
equality in \eqref{FLT Mayerhofer Wishart} holds by \cite[(9.4.4)]{dieudonne}. This concludes
the proof of Proposition \ref{FLT maximal}.

\end{appendix}


\begin{thebibliography}{00}
\bibitem{ahdida2013exact}
A.~Ahdida and A.~Alfonsi,
\emph{Exact and high-order discretization schemes for Wishart processes and their affine extension}.
The Annals of Applied Probability 23 (2013), No. 3, pp. 1025--1073.

\bibitem{bib:b91}
M.~F.~Bru,
\emph{Wishart processes}.
Journal of Theoretical Probability 4 (1991), No. 4, pp. 725--751.

\bibitem{bib:CFMT}
C.~Cuchiero, D.~Filipovi\'c, E.~Mayerhofer and J.~Teichmann,
\emph{Affine processes on positive semidefinite matrices}.
The Annals of Applied Probability 21 (2011), No. 2, pp. 397--463. 

\bibitem{Cuchiero11}
C.~Cuchiero, M.~Keller-Ressel and J.~Teichmann,
{\em Polynomial processes and their applications to mathematical finance}.
\newblock Finance and Stochastics 16 (2012),  No. 4, pp. 711-740.

\bibitem{Cpaths}
C.~Cuchiero and J.~Teichmann,
{\em Path properties and regularity of affine processes on general state spaces}.
\newblock S{\'e}minaire de Probabilit{\'e}s XLV Vol. 2078 (2013), pp. 201--244.


\bibitem{dieudonne}
J.~Dieudonn\'e, \emph{Foundations of Modern Analysis.}\newblock Pure and Applied Mathematics, P. Smith and S. Eilenberg, Eds. New York: Academic Press 10 (1969).

\bibitem{donati2004some}
C.~Donati-Martin, Y.~Doumerc, H.~Matsumoto and M.~Yor,
\emph{Some properties of the Wishart processes and a matrix extension of the Hartman-Watson laws}.
\newblock Publications of the Research Institute for Mathematical Sciences (Kyoto University) 40 (2004), No. 4, pp. 1385--1412.

\bibitem{Kurtz}
S.N.~Ethier and T.G.~Kurtz,
\emph{Markov processes: characterization and convergence.} Vol. 282. John Wiley \& Sons, 2009.


\bibitem{bib:farautKOR}
J.~Faraut and A.~Koranyi, 
\emph{Analysis on symmetric cones}. Oxford Mathematical Monographs. Oxford Science Publications. The Clarendon Press, Oxford University Press, New York, 1994.

\bibitem{bib:filcon}
D.~Filipovi\'c and E.~Mayerhofer, Vienna Institute of Finance (2009), private communication.

\bibitem{ADPTA}
D.~Filipovi\'c and E.~Mayerhofer,
\emph{Affine Diffusion Processes: Theory and Applications}. \newblock Advanced financial Modeling, Walter de Gruyter, Radon Series Comp. Appl. Math 8, 125--164, 2009.

\bibitem{bib:Gin}
S.~G.~Gindikin, 
\emph{Invariant generalized functions in homogeneous domains}. Functional Analysis and Its Applications 9 (1975), No.1, pp. 50--52.

\bibitem{bib:gm11}
P. Graczyk and J. Ma\l{}ecki, 
\emph{Multidimensional Yamada-Watanabe theorem and its applications to
particle systems}. Journal of Mathematical Physics 54 (2013), No.2, pp. 021503-1--021503-15.

\bibitem{bib:gm2}
P. Graczyk and J. Ma\l{}ecki, \emph{
Strong solutions of non-colliding particle systems}. Electronic Journal of Probability 19 (2014), No. 119, pp. 1--21 .

\bibitem{john-kotz} 
N.L. Johnson, S. Kotz,
 \emph{Distributions in Statistics: Continuous Multivariate Distributions}, John Wiley and Sons, New York, 1972.

\bibitem{mkem}
M.~Keller-Ressel and E.~Mayerhfer, \emph{Exponential Moments of Affine Processes}, Annals of Applied probability 25 (2015), No. 2, pp. 714--752.

\bibitem{bib:LetMassFalse}
G.~Letac and H.~Massam, 
\textit{The noncentral Wishart as an exponential family, and its moments. }
Journal of Multivariate Analysis 99 (2008), no. 7, pp. 1393--1417.

\bibitem{bib:LetMass}
G.~Letac and H.~Massam, 
\emph{Existence and non-existence of the non-central Wishart distributions.} ArXiv preprint: 1108.2849 (2011).

\bibitem{bib:mayer} 
E.~Mayerhofer, 
\emph{ Stochastic Analysis Methods in Wishart Theory II}. Modern Methods of Multivariate Statistics, 
P. Graczyk, A. Hassairi Eds., Travaux en Cours 82, Hermann, Paris, 2014.

\bibitem{bib:mayerJMA} 
E.~Mayerhofer, 
\emph{On the existence of non-central Wishart distributions}.
Journal of Multivariate Analysis 114 (2013), pp. 448--456. 

\bibitem{bib:PR} 
S.~D.~ Peddada and D.~St.~P.~Richards,
\emph{ Proof of a conjecture of M. L. Eaton on the characteristic function of the Wishart distribution.} The Annals of Probability 19 (1991), No. 2, pp. 868--874. 

\bibitem{rogerswilliams2}
L.C.G. Rogers and D.Williams, 
\emph{Diffusions, Markov Processes, and Martingales.}, 
Vol. 2, Cambridge Univ. Press, Cambridge, 2000.

\bibitem{Rudin}
W.~Rudin,
\emph{Real and complex analysis (3rd)}. New York: McGraw-Hill Inc, 1986.

\bibitem{YorEnc}
M.~J.~ Yor, \emph{Squared Bessel Processes}.\newblock Encyclopedia of Quantitative Finance (2010).

\end{thebibliography}
\end{document}